\documentclass[11pt,a4paper]{amsart}
\usepackage{amscd}

\usepackage{amsmath}
\usepackage{amsthm}
\usepackage{amssymb}

\usepackage[T1]{fontenc}
\usepackage[english]{babel}
\usepackage[utf8]{inputenc}
\usepackage{stmaryrd}
\usepackage{rotating}
\usepackage[all,poly,necula]{xy}
\usepackage{ulem}

\usepackage{color}
\definecolor{orange}{rgb}{1,0.90,0.5}
\setlength{\marginparwidth}{1.5cm}
\let\oldmarginpar\marginpar
\renewcommand\marginpar[1]{{\color{red}(**)}\oldmarginpar[\raggedleft\footnotesize \fcolorbox{blue}{orange}{\parbox{\marginparwidth}{\color{blue}{(**) #1}}}]%
{\raggedright\footnotesize \fcolorbox{blue}{orange}{\parbox{\marginparwidth}{\color{blue}{(**) #1}}}}}

\theoremstyle{plain}
\newtheorem{thm}{Theorem}[section]
\newtheorem*{thmw}{Theorem}

\newtheorem{cor}[thm]{Corollary}
\newtheorem*{corw}{Corollary}
\newtheorem{lem}[thm]{Lemma}
\newtheorem{prop}[thm]{Proposition}

\theoremstyle{definition}

\newtheorem{exm}[thm]{Example}

\theoremstyle{remark}
\newtheorem{rem}[thm]{Remark}

\def\s{\stackrel}

\renewcommand{\mod}{\operatorname{mod}\nolimits}
\newcommand{\Mod}{\operatorname{Mod}\nolimits}
\newcommand{\add}{\operatorname{add}\nolimits}
\newcommand{\diag}{\operatorname{diag}\nolimits}
\newcommand{\GL}{\operatorname{GL}\nolimits}

\newcommand{\D}{\operatorname{D}\nolimits}

\newcommand{\Hom}{\operatorname{Hom}\nolimits}

\newcommand{\uHom}{\operatorname{\underline{Hom}}\nolimits}
\newcommand{\uEnd}{\operatorname{\underline{End}}\nolimits}
\newcommand{\End}{\operatorname{End}\nolimits}
\renewcommand{\Im}{\operatorname{Im}\nolimits}

\newcommand{\Ext}{\operatorname{Ext}\nolimits}
\newcommand{\SL}{\operatorname{SL}\nolimits}

\newcommand{\op}{\operatorname{op}\nolimits}
\newcommand{\CM}{\operatorname{CM}\nolimits}

\newcommand{\M}{\mathcal M}
\newcommand{\uM}{\underline{\M}}
\newcommand{\oM}{\overline{\M}}
\newcommand{\B}{\mathcal B}
\newcommand{\uB}{\underline{\B}}
\newcommand{\oB}{\overline{\B}}
\newcommand{\C}{\mathcal C}
\newcommand{\uC}{\underline{\C}}
\newcommand{\oC}{\overline{\C}}
\newcommand{\Omegab}{\overline{\Omega}}
\newcommand{\CC}{\mathbb C}

\newcommand{\vecv}[2]{{\left(\!\!
    \begin{array}{c}
      #1 \\
      #2
    \end{array}
    \!\!\right)}}
\newcommand{\vech}[2]{{\left(\!\!
    \begin{array}{cc}
      #1 & #2
    \end{array}
    \!\!\right)}}

\newcommand{\svecv}[2]{\left(\begin{smallmatrix}
      #1 \\
      #2
    \end{smallmatrix}\right)}

\newcommand{\svech}[2]{\left(\begin{smallmatrix}
      #1 & #2
\end{smallmatrix}\right)}

\renewcommand{\phi}{\varphi}

\renewcommand{\emph}{\textit}

\newsavebox\locboxinminipage
\newlength\locboxinminipagel
\newcommand{\boxinminipage}[1]
{%
 \sbox\locboxinminipage{#1}%
 \settowidth\locboxinminipagel{\usebox{\locboxinminipage}}%
 \begin{minipage}{\locboxinminipagel}\usebox{\locboxinminipage}\end{minipage}%
}

\begin{document}

\title[Quotients of exact categories by cluster tilting subcategories]{Quotients of exact categories by cluster tilting subcategories as module categories}
\author{Laurent Demonet}
\thanks{The first author was supported during this research by grants P10723 and 22000723 of Japan Society for Promotion of Science.}
\address{Graduate School of Mathematics \\ Nagoya University \\ 464-8602 Nagoya, Japan}
\email{Laurent.Demonet@normalesup.org}
\author{Yu Liu}
\address{Graduate School of Mathematics \\ Nagoya University \\ 464-8602 Nagoya, Japan}
\email{d11005m@math.nagoya-u.ac.jp}

\begin{abstract}
We prove that some subquotient categories of exact categories are abelian. This generalizes a result by Koenig-Zhu in the case of (algebraic) triangulated categories. As a particular case, if an exact category $\B$ with enough projectives and injectives has a cluster tilting subcategory $\M$, then $\B/\M$ is abelian. More precisely, it is equivalent to the category of finitely presented modules over $\uM$.
\end{abstract}

\keywords{exact category, quotient, rigid, cluster tilting, abelian category, equivalence}

\maketitle

\section{Introduction}

Recently, cluster tilting theory (see for example \cite{BMRRT, GLS, 5}) permitted to construct abelian categories from some triangulated categories. The main aim of this article is to provide a framework generalizing this observation to exact categories. By Buan-Marsh-Reiten \cite[Theorem 2.2]{BMR} in cluster categories, by Keller-Reiten \cite[Proposition 2.1]{KR} in the $2$-Calabi-Yau case and then by Koenig-Zhu \cite[Theorem 3.3]{7} in the general case, one can pass from triangulated categories to abelian categories by factoring out any cluster tilting subcategory.

The main aim of this article is to prove the analogous result for exact categories: \emph{The quotient of an exact category with enough projectives by a cluster tilting subcategory is abelian}. This new result seems \emph{a priori} less surprising than the one in triangulated categories because these ones are intuitively further to abelian categories. Notice that, applied to Frobenius categories, it is equivalent to the result for \emph{algebraic} triangulated categories (\emph{i.e.}~stable categories of Frobenius categories, see \cite{6}). However, most triangulated categories appearing in representation theory turn out to be in fact \emph{algebraic}, while most exact categories are not Frobenius. In this respect, the case of exact categories can be seen as a generalization of the result concerning triangulated categories, as well as a more natural version. To support this idea, we give examples coming from singularity theory and from $n$-representation-finite algebras \cite{IO} in last section of this paper.

Let $\B$ be an exact category with enough projectives and injectives and $\M$ be a full \emph{rigid} subcategory of $\B$ (\emph{i.e.} $\Ext^1_\B(\M, \M) = 0$ where $\Ext^1_\B(\M, \M)$ is the essential image of the bifunctor $\Ext^1_\B$). We denote by $\uM$ (resp. $\oM$) the full subcategory of the stable category $\uB$ (resp. costable category $\oB$) with the same objects as $\M$. Let $\M_L$ (resp. $\M_R$) be the subcategory of objects $X$ which admit short exact sequences
$$0{\rightarrow}X\xrightarrow{d^0}M^0\xrightarrow{d^1} M^1{\rightarrow} 0 \quad \text{(resp. } 0{\rightarrow}M_1 \xrightarrow{d_1}M_0\xrightarrow{d_0} X{\rightarrow} 0 \text{)}$$
with $M^0, M^1, M_0, M_1 \in \M$. In this case, $d^0$ (resp. $d_0$) is a left (resp. right) $\M$-approximation of $X$. Finally, $\Omegab  \M$ is the class of objects $X\in {\B}$ such that there exists a short exact sequence
$$0\rightarrow M\rightarrow I\rightarrow X\rightarrow 0$$
where $M\in {\M}$, and $I$ is injective.

The main result of this paper (theorems \ref{2.5} and \ref{2.7}) is:
\begin{thmw}
 There are equivalences of categories:
 \begin{enumerate}
  \item between the subquotient category $\M_L/[\M]$ of $\B$ and the category of finitely presented modules $\mod\uM$ if $\M$ contains all projective objects of $\B$;
  \item between the subquotient category $\M_R/[\Omegab \M]$ of $\B$ and the category of finitely presented modules $\mod\oM$ if $\M$ contains all injective objects of $\B$.
 \end{enumerate}
\end{thmw}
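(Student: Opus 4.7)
The plan is to exhibit the equivalence in part (1) via the explicit functor $F \colon \M_L \to \mod\uM$ defined by $F(X) = \Ext^1_\B(-,X)|_\uM$; part (2) will follow by duality (swapping inflations and deflations, left approximations for right, $\uM$ for $\oM$, and $[\M]$ for $[\Omegab\M]$). First I check $F$ lands in $\mod\uM$. That $\Ext^1(-,X)$ descends from $\M$ to $\uM$ is clear: any morphism in $\M$ factoring through a projective $P$ acts trivially, since it passes through $\Ext^1(P,X)=0$. Applying $\Hom_\B(-,X)$ to the defining conflation $0 \to X \to M^0 \to M^1 \to 0$ of $X \in \M_L$ and invoking rigidity to kill $\Ext^1(N,M^0)$ for $N\in\M$ yields an exact sequence $\Hom_\B(-,M^0)|_\M \to \Hom_\B(-,M^1)|_\M \to \Ext^1_\B(-,X)|_\M \to 0$, which descends to a presentation of $F(X)$ by representables in $\mod\uM$. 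That $F$ kills $[\M]$ is then immediate: $F(M) = \Ext^1_\B(-,M)|_\uM = 0$ for $M\in\M$ by rigidity, so $\bar F \colon \M_L/[\M] \to \mod\uM$ is well defined.

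For essential surjectivity, given any finite presentation $\Hom_\uM(-,M^0) \xrightarrow{\bar d} \Hom_\uM(-,M^1) \to G \to 0$ in $\mod\uM$, I lift $\bar d$ to $d \colon M^0 \to M^1$ in $\M$. In general $d$ is not a deflation, but choosing a deflation $p \colon P \twoheadrightarrow M^1$ with $P$ projective (whence $P \in \M$ by hypothesis), the map $(d,p) \colon M^0 \oplus P \twoheadrightarrow M^1$ is a deflation, still represents $\bar d$ in $\uM$ since $p$ factors through the projective $P$, and its kernel $X$ lies in $\M_L$ with $F(X) \cong G$.

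The fully faithful statement is where the real work lies. For fullness, given $g \colon F(X) \to F(Y)$ with defining conflations $0 \to X \to M^0_X \xrightarrow{d^1_X} M^1_X \to 0$ and $0 \to Y \to M^0_Y \xrightarrow{d^1_Y} M^1_Y \to 0$, the Yoneda lemma lifts $g$ along the presentations to morphisms $\bar h^0 \colon M^0_X \to M^0_Y$ and $\bar h^1 \colon M^1_X \to M^1_Y$ in $\uM$ satisfying $\bar d^1_Y \bar h^0 = \bar h^1 \bar d^1_X$. Choosing lifts in $\M$, the square commutes only modulo a morphism $\psi$ factoring through a projective $P$, which I correct by lifting $\psi$ across the deflation $d^1_Y$ using projectivity of $P$ and absorbing the correction into $h^0$. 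Once the square commutes on the nose, the inflation $Y \hookrightarrow M^0_Y$ being the kernel of $d^1_Y$ yields a unique $h \colon X \to Y$ with $F(h) = g$. For faithfulness, if $F(f) = 0$ for $f \colon X \to Y$, evaluating at $M^1_X$ shows the image in $\Ext^1_\B(M^1_X, Y)$ of the tautological class of the defining conflation of $X$ vanishes, so the pushout along $f$ splits and produces a map $M^0_X \to Y$ witnessing that $f$ factors through $M^0_X \in \M$. The principal obstacle throughout is the careful accounting of morphisms modulo projective factorizations, especially correcting this commutativity defect in the fullness step.
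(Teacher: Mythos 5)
Your argument for part (1) is essentially correct and largely parallels the paper's (Theorem~\ref{2.5}): you use the same functor $F(X)=\Ext^1_\B(-,X)|_{\uM}$, the same projective-envelope trick for essential surjectivity (the paper's $0\to L\to M_1\oplus P_0\to M_0\to 0$ is your $(d,p)\colon M^0\oplus P\twoheadrightarrow M^1$), and the same Yoneda-plus-correction argument for fullness. Your faithfulness argument is a genuine (and somewhat cleaner) variant: you observe that $F([f])=0$ forces the push-out of the defining conflation along $f$ to have trivial class in $\Ext^1_\B(M^1_X,Y)$, hence to split, and the retraction exhibits $f$ as factoring through $M^0_X\in\M$. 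The paper instead threads the factorization through an explicit diagram chase with the second presentation of $Y$ and a Yoneda lift of the horizontal map. Both are correct; yours sidesteps the lift of $\beta$ and the subsequent projective correction, at the cost of invoking the identification of $\Ext^1$ with classes of conflations.

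However, your claim that part (2) ``follows by duality'' is a genuine gap. Passing to $\B^{\op}$ turns part (1) into an equivalence $\M_R/[\M]\simeq(\mod\oM^{\op})^{\op}$, i.e.\ a statement about finitely \emph{co}presented modules and the quotient by $[\M]$, not the quotient by $[\Omegab\M]$. There is no ``swap $[\M]$ for $[\Omegab\M]$'' operation intrinsic to duality: under $\B\leftrightarrow\B^{\op}$, the ideal $[\M]$ simply goes to $[\M^{\op}]$. In the paper, part (2) is Theorem~\ref{2.7} and requires a different functor altogether, namely $K(X)=\overline\Hom_\B(-,X)|_{\oM}$ (a $\Hom$-type functor, not an $\Ext$-type functor), together with Lemma~\ref{2.16} and a push-out along an injective envelope to show that $K(f)=0$ forces $f$ to factor through an object of $\Omegab\M$. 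You would need to carry out this argument explicitly; it cannot be read off from part (1).

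A minor slip: in your first paragraph you speak of ``applying $\Hom_\B(-,X)$ to the defining conflation'' of $X$, which doesn't typecheck since $X$ is a term of that conflation. You clearly mean $\Hom_\B(N,-)$ for $N\in\M$ applied to $0\to X\to M^0\to M^1\to 0$; the resulting sequence you write is the right one.
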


This permits to get the following corollary (see Lemma \ref{2.2}):
\begin{corw}
 If $\M$ is contravariantly finite, then $\M_L/[\M]$ is abelian.
\end{corw}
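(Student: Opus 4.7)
By Theorem~\ref{2.5}(1), the equivalence $\M_L/[\M]\simeq\mod\uM$ reduces the corollary to the statement that the category $\mod\uM$ of finitely presented $\uM$-modules is abelian. By the classical criterion (due to Auslander and Freyd, presumably recorded as Lemma~\ref{2.2}), this happens precisely when the additive category $\uM$ admits weak kernels. The plan is therefore to exploit the contravariant finiteness of $\M$ in $\B$ to produce weak kernels in $\uM$.

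Given $\underline{f}\colon M_1\to M_2$ in $\uM$, lift it to $f$ in $\B$ and, using that $\B$ has enough projectives, fix a deflation $p_0\colon P_0\twoheadrightarrow M_2$ with $P_0$ projective. Form the pullback $X=M_1\times_{M_2}P_0$ in $\B$ (pullbacks along deflations exist in an exact category), with structural morphisms $\alpha\colon X\to M_1$ and $\beta\colon X\to P_0$. Contravariant finiteness of $\M$ then yields a right $\M$-approximation $\pi\colon M_0\to X$; set $g=\alpha\pi$. Since $fg=p_0\beta\pi$ factors through the projective $P_0$, one has $\underline{f}\,\underline{g}=0$ in $\uM$, so $\underline{g}$ is the candidate weak kernel.

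To check the weak-kernel property, let $\underline{h}\colon N\to M_1$ in $\uM$ satisfy $\underline{f}\,\underline{h}=0$ and lift it to $h$ in $\B$. Then $fh$ factors through some projective, and, since $p_0$ is a deflation and projectives lift across deflations, this factorization may be rewritten as $fh=p_0k$ for some $k\colon N\to P_0$. The pullback property produces $h'\colon N\to X$ with $\alpha h'=h$, and the approximation property of $\pi$, applicable because $N\in\M$, yields $h''\colon N\to M_0$ with $\pi h''=h'$, whence $gh''=h$ and hence $\underline{h}=\underline{g}\,\underline{h''}$ in $\uM$. The only genuinely subtle point is the translation ``zero in $\uM$'' $\Leftrightarrow$ ``factors through a projective'', which is precisely where the hypothesis that $\M$ contains the projectives of $\B$ is used; all remaining verifications are formal manipulations with the defining properties of pullbacks, deflations and approximations.
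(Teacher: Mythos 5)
Your proof is correct and follows essentially the same route as the paper: reduce via Theorem~\ref{2.5} to showing $\mod\uM$ is abelian, then establish pseudokernels in $\uM$ exactly as in the paper's Lemma~\ref{2.2}, by pulling back $f$ along a projective deflation $p_0\colon P_0\twoheadrightarrow M_2$, taking a right $\M$-approximation of the pullback object, and projecting to $M_1$. The only tacit point worth flagging (and you rightly note it) is that $\mathcal{P}\subseteq\M$ is needed both so that a lift of $\underline{h}$ with $fh$ factoring through a projective can be rewritten as $fh=p_0k$, and so that the source $M_0$ of the approximation is an object of $\uM$.
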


For the sake of generality, note that we have to consider categories of modules over additive categories. For more details about these, see \cite{ASS, 3}.

The previous results concern categories $\M_L$ and $\M_R$ which have not good properties in general. From now on, we suppose that $\M$ is $n$-cluster tilting for some integer $n \geq 2$ (see \cite{IyamaHigher,5}). Thus, the properties of $\M_L$ and $\M_R$ becomes much clearer: in particular, \begin{align*}
 & \M_L={}^{\bot_{n-2}} \M = \{ X\in {\B} \,|\, \forall i \in \{1, \dots, n-2\}, \Ext_{\B}^i(X, {\M})=0 \} \\
\text{and} \quad & \M_R = \M^{\bot_{n-2}} = \{ X\in {\B} \,|\, \forall i \in \{1, \dots, n-2\}, \Ext_{\B}^i({\M}, X)=0 \},
\end{align*}
and therefore they are exact subcategories of $\B$ (see Remark \ref{Mpex}). In particular we get

\begin{corw}
 If $\M$ is $2$-cluster tilting then $\B / [\M] \simeq \mod \uM$ is abelian.
\end{corw}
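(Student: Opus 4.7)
The plan is to derive this corollary as a direct specialisation of Theorem \ref{2.5}(1) together with the preceding Corollary on abelianness, invoking the description of $\M_L$ for $n$-cluster tilting subcategories stated just above the statement.

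First I would observe that $\M$ contains all projectives of $\B$: by definition a $2$-cluster tilting subcategory satisfies $\M = \{X \in \B \mid \Ext^1_\B(X,\M) = 0\}$, and every projective object of $\B$ trivially lies in this set. So the hypothesis ``$\M$ contains all projectives'' required by Theorem \ref{2.5}(1) is met. Second, the formula $\M_L = {}^{\bot_{n-2}}\M$ recalled in the excerpt becomes, for $n = 2$, an empty family of Ext-vanishing conditions, and hence $\M_L = \B$. Concretely this amounts to the fact that for every $X \in \B$ there is a short exact sequence $0 \to X \to M^0 \to M^1 \to 0$ with $M^0, M^1 \in \M$: take a left $\M$-approximation $d^0 \colon X \to M^0$ (which exists by functorial finiteness), verify it is an admissible monomorphism by factoring the inclusion of $X$ into an injective envelope through $d^0$ (using that $\M$ contains all injectives of $\B$), and then check that $\Coker d^0$ lies in $\M$ via the Ext-vanishing characterisation of $2$-cluster tilting applied to the long exact sequence of $\Hom_\B(-,\M)$.

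Combining these two observations, Theorem \ref{2.5}(1) gives the equivalence $\B/[\M] = \M_L/[\M] \simeq \mod\uM$. Abelianness is then immediate from the first Corollary, since a $2$-cluster tilting subcategory is in particular contravariantly finite.

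The only potentially delicate step is the identification $\M_L = \B$: although conceptually trivial from the index range $\{1,\dots,n-2\} = \emptyset$, producing the approximation sequence as an \emph{admissible} short exact sequence in the exact-categorical sense (rather than merely an exact complex) requires careful bookkeeping with admissible monomorphisms and cokernels, and is precisely the content that Remark \ref{Mpex} packages for us. Everything else is formal.
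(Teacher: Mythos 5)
Your derivation is essentially the one the paper intends: specialise the identification of $\M_L$ to $n=2$ to get $\M_L = \B$, then apply Theorem~\ref{2.5} and Corollary~\ref{ab}. Two small corrections, though. The identity $\M_L = {}^{\bot_{n-2}}\M$ is the content of Proposition~\ref{2.3}, not Remark~\ref{Mpex}; the remark only records that ${}^{\bot_{n-2}}\M$ is extension-closed, which for $n=2$ is the vacuous observation that $\B$ is extension-closed in itself. And the direct argument you sketch for $\B \subseteq \M_L$---factoring an injective envelope $i \colon X \to I$ as $i = h d^0$ through a left $\M$-approximation $d^0$ and concluding that $d^0$ is an inflation---does work here, but it silently uses that $\B$ is weakly idempotent complete (true, since $\B$ is assumed Krull--Schmidt); a monomorphism factoring an inflation need not itself be an inflation in a general exact category. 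The proof of Proposition~\ref{2.3} avoids this subtlety by instead pushing out the inflation $X \to I_0$ along $d_X$, obtaining the inflation $X \xrightarrow{\svecv{d_X}{d_0}} M_X \oplus I_0$ by construction, and then showing the cokernel $C'$ lies in $\M$ by the same $\Ext$-vanishing computation you indicate. Both routes land in the same place; the paper's is marginally more robust because it does not invoke idempotent completeness.
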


As a particular case of this corollary, we obtain (see Example \ref{exCM})
\begin{corw}
 Let $S = \CC\llbracket X, Y, Z \rrbracket$ and $G$ be a finite subgroup of $\GL_3(\CC)$ without pseudo-reflections acting on $S$. We suppose that $R = S^G$ is an isolated singularity. Then, denoting the category of Cohen-Macaulay modules over $R$ by $\CM(R)$,
 $$\frac{\CM(R)}{[S]} \simeq \mod \left(\frac{S * G}{(e)}\right)^{\op}$$
 where $S*G$ is the skew-group algebra and the idempotent $e$ corresponds to the trivial representation of $G$.
\end{corw}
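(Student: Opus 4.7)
The plan is to apply the preceding $2$-cluster tilting corollary with $\B = \CM(R)$ and $\M = \add S$, and then identify $\mod\uM$ with the stated module category through Auslander's classical description of $\End_R(S)$.

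First, since $S$ is regular and $G \leq \GL_3(\CC)$ acts without pseudo-reflections, $R = S^G$ is a complete local Gorenstein normal domain, so $\CM(R)$ is a Frobenius exact category; its projective-injective objects are precisely $\add R$, and $R$ sits as a direct summand of $S$ (via the Reynolds projector), so $\add R \subseteq \add S = \M$. Hence $\B$ has enough projectives and injectives, and $\M$ contains them. The nontrivial content is that $\M$ is in fact $2$-cluster tilting in $\CM(R)$: this is a theorem of Iyama asserting that for $G \leq \GL_d(\CC)$ acting on $S = \CC\llbracket x_1, \dots, x_d \rrbracket$ without pseudo-reflections, with $R = S^G$ an isolated singularity, $S$ is a $(d-1)$-cluster tilting object in $\CM(R)$; for $d=3$ this is exactly the $2$-cluster tilting hypothesis. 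The previous corollary then gives an equivalence $\CM(R)/[S] \simeq \mod \uM$.

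Second, since $S$ is an additive generator of $\add S$, Yoneda/Morita theory for additive categories yields $\mod \uM \simeq \mod \uEnd_{\CM(R)}(S)^{\op}$, reducing the problem to computing $\uEnd_{\CM(R)}(S)$. By Auslander's theorem the natural map $S*G \to \End_R(S)$, sending $s \cdot g$ to the $R$-linear endomorphism $x \mapsto s\, g(x)$, is an isomorphism of rings. The idempotent $e \in S*G$ attached to the trivial representation of $G$ corresponds under this map to the projection from $S$ onto its direct summand $R = S^G$; consequently the two-sided ideal $(e) = (S*G)\, e\, (S*G)$ coincides with the ideal of endomorphisms of $S$ that factor through $\add R$, i.e.\ through the projective-injective objects of $\CM(R)$. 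Passing to the stable endomorphism ring gives $\uEnd_{\CM(R)}(S) \cong (S*G)/(e)$, and assembling the equivalences produces the claimed description.

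The principal obstacle is the verification of the $2$-cluster tilting property of $\add S$. The rigidity $\Ext^1_R(S,S) = 0$ reduces on the punctured spectrum to the freeness of the $G$-action there and extends globally by a depth argument, but the existence of length-two $\add S$-resolutions for every Cohen--Macaulay $R$-module is the substantive input and must be invoked from Iyama's higher-dimensional Auslander correspondence; the rest of the argument is then formal given the previous corollary and Auslander's classical isomorphism $S*G \cong \End_R(S)$.
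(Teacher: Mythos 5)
Your route is essentially the paper's: apply the $2$-cluster tilting corollary to $\B=\CM(R)$ and $\M=\add S$, invoke Iyama's theorem from \cite{IyamaHigher} that $S$ is a $(d-1)$-cluster tilting object of $\CM(R)$ when $R=S^G$ is an isolated singularity with $G$ acting without pseudo-reflections, and then combine it with Auslander's isomorphism $\End_R(S)\simeq S*G$ together with the observation that the Reynolds idempotent $e$ corresponds to the projection onto the summand $R$, so that $\uEnd_R(S)\simeq (S*G)/(e)$. This is exactly how the paper treats it in Example~\ref{exCM}.

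There is, however, one concrete error that should be corrected, and it is not innocuous because the paper goes out of its way to emphasize the point. You assert that $R$ is Gorenstein and that $\CM(R)$ is a Frobenius exact category with projective-injectives $\add R$. In general this is false: $R=S^G$ is Gorenstein if and only if $G\subseteq\SL_3(\CC)$ (Watanabe~\cite{Wata}), and when $R$ is not Gorenstein, $\CM(R)$ is \emph{not} Frobenius; the projectives are $\add R$ but the injectives are $\add\omega_R$ for the canonical module $\omega_R$, and these need not coincide. The paper explicitly records this remark in Example~\ref{exCM}, precisely because the non-Frobenius case is where the exact-category statement genuinely goes beyond the algebraic-triangulated version. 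Your argument survives the correction, since the corollary only needs $\CM(R)$ to have enough projectives and injectives (guaranteed for any CM local ring with a canonical module), and since the containment $\mathcal P\cup\mathcal I\subseteq\M$ is automatic from the $2$-cluster tilting hypothesis. Likewise, the phrase ``factor through the projective-injective objects'' should read ``factor through the projectives $\add R$,'' because $\uM=\M/[\mathcal P]$ is defined by factoring through projectives; your identification of this ideal with $(e)$ is otherwise correct. Drop the Gorenstein/Frobenius claim and replace ``projective-injective'' by ``projective,'' and the proposal matches the paper.
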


 For example, if $G = \langle \diag(\zeta, \zeta, \zeta) \rangle$ where $\zeta$ is a primitive $n$-th root of the unity, then
 $$\frac{\CM(R)}{[S]} \simeq \mod \CC Q / I $$
 where
 $$Q = \boxinminipage{\xymatrix@C=2cm{\bullet \ar@/^/[r]^{x_1} \ar[r]|{y_1} \ar@/_/[r]_{z_1} & \bullet \ar@/^/[r]^{x_2} \ar[r]|{y_2} \ar@/_/[r]_{z_2} & \bullet \ar@{..}[r] & \bullet \ar@/^/[r]^{x_{n-2}} \ar[r]|{y_{n-2}} \ar@/_/[r]_{z_{n-2}} & \bullet }}$$
 and $I$ is the ideal of $\CC Q$ generated by the relations $x_{i} y_{i+1} = y_{i} x_{i+1}$, $y_{i} z_{i+1} = z_{i} y_{i+1}$, $z_{i} x_{i+1} = x_{i} z_{i+1}$ for $1 \leq i \leq n-3$.

Moreover, if $\B$ has an $(n-1)$-AR translation $\tau_{n-1}$ as defined in \cite{IyamaHigher}, we can combine the two equivalences in the following commutative diagram of equivalences (see Theorem \ref{2.14}):
$$\xymatrix{
{}^{\bot_{n-2}} \M/[\M] \ar[d]_{\tau_{n-1}}^\wr \ar[r]^-\sim &\mod\uM \ar[d]^\wr\\
\M^{\bot_{n-2}} / [\Omegab \M] \ar[r]^-\sim &\mod\oM.
}$$
By duality, if we denote by $\mod' \uM$ (resp. $\mod' \oM$) the category of finitely copresented modules over $\uM$ (resp. $\oM$), we get the following commutative diagram:
$$\xymatrix{
{}^{\bot_{n-2}} \M/[\Omega\M]  \ar[d]_{\tau_{n-1}}^\wr  \ar[r]^-\sim &\mod' \uM \ar[d]^\wr \\
\M^{\bot_{n-2}}/[\M]\ar[r]^-\sim &\mod' \oM
}$$
where $\Omega\M$ the class of objects $X\in {\B}$ such that there exists a short exact sequence
$$0\rightarrow X\rightarrow P\rightarrow M\rightarrow 0$$
with $M\in \M$ and $P$ projective.

In Section 2, we collect basic material on exact categories. Section 3 contains main results and their proofs. In Section 4, we discuss several examples.

\section{Preliminaries}

\subsection{Exact categories}

In this section, we briefly review the essential properties of exact categories. For more details, we refer to \cite{2}, \cite[Appendix A]{1} and~\cite{Qui}. Let $\B$ be an additive category. Let us call \emph{weak short exact sequences} of $\B$ pairs of morphisms $(i,d)$ such that $i$ is a kernel of~$d$ and $d$ a cokernel of $i$. Let $\mathcal S$ be a class of weak short exact sequences of $\B$, stable under isomorphisms, direct sums and direct summands. If a weak short exact sequence $(i,d)$ is in $\mathcal S$, we call it a \emph{short exact sequence}, denote it by
$$0 \rightarrow X \xrightarrow{i} Y \xrightarrow{d} Z \rightarrow 0,$$
and we call $i$ an \emph{inflation} and $d$ a \emph{deflation}. The class $\mathcal S$ is called an \emph{exact structure} on $\B$ (and $\B$ is said to be an \emph{exact category}) if it satisfies the following properties, equivalent to the original axioms:

\begin{itemize}
\item Identity morphisms are inflations and deflations.

\item The composition of two inflations (resp. deflations) is an inflation (resp. deflation).

\item If $0 \rightarrow X \xrightarrow{i} Y \xrightarrow{d} Z \rightarrow 0$ is a short exact sequence, for any morphisms $f : Z' \rightarrow Z$ and $g: X \rightarrow X'$, there are commutative diagrams
$$\xymatrix{
0 \ar[r] & X \ar@{=}[d] \ar[r] & Y' \ar[d]_{f'} \ar[r]^{d'} \ar@{}[dr]|{(1)} & Z' \ar[d]^f \ar[r] & 0 \\
0 \ar[r] & X \ar[r] & Y \ar[r]_d & Z \ar[r] & 0 \\
} \quad \xymatrix{
0 \ar[r] & X \ar[d]_{g} \ar@{}[dr]|{(2)} \ar[r]^i & Y \ar[d]^{g'} \ar[r] & Z \ar@{=}[d] \ar[r] & 0 \\
0 \ar[r] & X' \ar[r]_{i'} & Y' \ar[r] & Z \ar[r] & 0 \\
}$$
the rows of which are short exact sequences, the square $(1)$ being a pull-back and $(2)$ being a push-out. In this case, we easily deduce from previous points that
\begin{align*}
 & 0 \rightarrow Y' \xrightarrow{\svecv{f'}{d'}} Y \oplus Z' \xrightarrow{\svech{-d}{f}} Z \rightarrow 0 \\
 \text{and} \quad & 0 \rightarrow X \xrightarrow{\svecv{g}{i}} X' \oplus Y \xrightarrow{\svech{-i'}{g'}} Y' \rightarrow 0
\end{align*}
are short exact sequences.
\end{itemize}

From the definition of the exact category, we get the following lemma.

\begin{lem}\label{2.16}
If $0 \rightarrow X \xrightarrow{i} Y \xrightarrow{d} Z \rightarrow 0$ and $0\rightarrow Y\xrightarrow{f} W\rightarrow V\rightarrow 0$ are two short exact sequences, then there is a commutative diagram of short exact sequences
$$\xymatrix{
&&0\ar[d] &0\ar[d]\\
0 \ar[r] &X \ar@{=}[d] \ar[r]^{i} &Y \ar[d]^{f} \ar[r]^{d} &Z \ar[d] \ar[r] &0\\
0 \ar[r] &X \ar[r] &W \ar[r]^g \ar[d] &U \ar[r] \ar[d] &0\\
&&V \ar@{=}[r] \ar[d] &V \ar[d]\\
&&0 &0
}$$
where the upper-right square is both a push-out and a pull-back.
\end{lem}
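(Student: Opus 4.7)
The plan is to construct the diagram by one application of the push-out axiom followed by an identification of the middle row.

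First, I apply axiom~(2) to the short exact sequence $0\to Y\xrightarrow{f} W\to V\to 0$ together with the morphism $d\colon Y\to Z$ (playing the role of the morphism out of the leftmost object). This produces an object $U$, morphisms $\alpha\colon Z\to U$ and $g\colon W\to U$, a short exact sequence $0\to Z\xrightarrow{\alpha} U\to V\to 0$ which will be the right column of the target diagram, and a commuting upper-right square $\alpha d=gf$ which is both a push-out and a pull-back. The axiom also delivers, as a by-product, the auxiliary short exact sequence $0\to Y\xrightarrow{\svecv{d}{f}} Z\oplus W\xrightarrow{\svech{-\alpha}{g}} U\to 0$, which I may use as a safety net but should not strictly need.

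Next, I establish the middle row $0\to X\xrightarrow{fi} W\xrightarrow{g} U\to 0$ as a short exact sequence. Since $i$ and $f$ are inflations, so is the composition $fi$, whence there is some short exact sequence $0\to X\xrightarrow{fi} W\xrightarrow{p} U'\to 0$ with $p$ a deflation. The identity $g\circ fi=\alpha di=0$ forces $g$ to factor uniquely as $g=qp$ for some $q\colon U'\to U$. Conversely, $p\circ fi=0$ means that $pf$ factors uniquely through $d$ as $pf=rd$ for some $r\colon Z\to U'$, and then the push-out property of the upper-right square yields a unique $s\colon U\to U'$ with $sg=p$ and $s\alpha=r$. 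Uniqueness of factorizations then gives $qs=1_U$ and $sq=1_{U'}$; hence $U'\cong U$ and the middle row is short exact.

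Finally, the leftmost column is the identity on $X$, the middle column is the given sequence $0\to Y\to W\to V\to 0$, and the top row is the given sequence $0\to X\to Y\to Z\to 0$; all remaining commutativities are built into the construction of the morphisms in the previous steps, and the upper-right square is a push-out and pull-back by the first step. I expect the main obstacle to be the second step, where I must identify the cokernel $U'$ of the composed inflation $fi$ with the push-out object $U$: this requires careful bookkeeping of universal properties, though once the push-out property of the upper-right square is available the computation is essentially forced.
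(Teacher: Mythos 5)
Your proposal is correct, and it takes a genuinely different route from the paper in the key step where one identifies the middle row $0\to X\xrightarrow{fi}W\xrightarrow{g}U\to 0$ as a short exact sequence. The paper proves this directly as a kernel--cokernel pair: it first derives the pull-back property (from the auxiliary Mayer--Vietoris sequence $0\to Y\to Z\oplus W\to U\to 0$), uses it to show that $fi$ is a weak kernel of $g$, then upgrades this to a genuine kernel because $fi$ is an inflation and hence monic, and finally uses the push-out property to see that $g$ is a cokernel of $fi$. You instead observe that $fi$, being a composition of inflations, already admits a conflation $0\to X\xrightarrow{fi}W\xrightarrow{p}U'\to 0$ in the exact structure, and then build mutually inverse maps $q\colon U'\to U$ and $s\colon U\to U'$ from the cokernel property of $p$ and the push-out property of the square; this identification $U'\cong U$ transports the short exact sequence onto $(fi,g)$. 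Your argument has the small advantages of not needing the pull-back property at all for this step, and of exhibiting $(fi,g)$ outright as isomorphic to a conflation already in $\mathcal S$ (making its membership in $\mathcal S$ immediate by stability under isomorphism), whereas the paper's argument, strictly speaking, only produces a weak short exact sequence and leaves implicit the standard remark that it must coincide with the conflation that witnesses $fi$ being an inflation. The paper's argument, on the other hand, avoids the bookkeeping of constructing and checking inverse morphisms.
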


\begin{proof}
Since $f:Y\rightarrow W$ is an inflation, there exists a push-out
$$\xymatrix{
Y \ar[d]^{f} \ar[r]^{d} &Z \ar[d]\\
W \ar[r]^g &U.}$$
Then we get a short exact sequence $0\rightarrow Y\rightarrow Z\oplus W\rightarrow U\rightarrow 0$, and thus the above diagram is also a pull-back. Now, if $g':V'\rightarrow W$ is a morphism such that $gg'=0$, then by the definition of a pull-back diagram, there exists a morphism $h:V'\rightarrow Y$
$$\xymatrix{
V' \ar@/_/[ddr]_{g'} \ar@{.>}[dr]^h \ar@/^/[drr]^0\\
&Y \ar[d]^{f} \ar[r]^{d} &Z \ar[d]\\
&W \ar[r]^g &U}$$
such that $g'=fh$ and $dh=0$. Thus $h$ factors through $i$, and therefore $g'$ factors through $fi$.
Finally, as $fi$ is an inflation, $fi$ is a kernel of $g$. If $l: W \rightarrow N$ satisfies $lfi=0$, then there exists a unique morphism $k: Z\rightarrow N$ such that $lf=kd$, since $d$ is the cokernel of $i$. Then by definition of a push-out diagram, there is a unique morphism $s:U\rightarrow N$ such that $l=sg$:
$$\xymatrix{
Y \ar[d]^f \ar[r]^{d} &Z \ar[d] \ar@/^/[ddr]^k\\
W \ar@/_/[drr]_l \ar[r]^g &U \ar@{.>}[dr]^s\\
&&N.}$$
Hence $g$ is the cokernel of $fi$.
\end{proof}

An object $P \in \B$ is said to be \emph{projective} if for any deflation $d$, $\Hom_\B(P, d)$ is surjective. A \emph{projective cover} of an object $X \in \B$ is a deflation $P \rightarrow X$ where $P$ is projective. An object $I \in \B$ is said to be \emph{injective} if for any inflation $i$, $\Hom_\B(i, I)$ is surjective. An \emph{injective envelope} of an object $X \in \B$ is an inflation $X \rightarrow I$ where $I$ is injective. The subcategory of projectives (resp. injectives) is denoted by $\mathcal P$ (resp. $\mathcal I$).

As for abelian categories, $\B$ is said to have \emph{enough injectives} (resp. \emph{enough projectives}) if every object admits an injective envelope (resp. a projective cover).

As in abelian category, if $\B$ has enough injectives (resp. projectives), we can consider injective (resp. projective) resolutions and $\Ext$ functors as right-derived functors of $\Hom$. Thus, for all $X, Y \in \B$, $\Ext_{\B}^1(Y, X)$ parameterizes short exact sequences $0 \rightarrow X \rightarrow Z \rightarrow Y \rightarrow 0$ up to equivalence.

\subsection{Cluster tilting subcategories}

From now on, let $\B$ be a $\Hom$-finite Krull-Schmidt $k$-linear exact category with enough injectives and projectives.

Recall that a subcategory $\mathcal{M}$ is called \emph{contravariantly} (resp. \emph{covariantly}) \emph{finite}
in $\mathcal{B}$ if any object $B$ of $\mathcal{B}$ admits
a \emph{right} (resp. \emph{left}) $\mathcal{M}$-\emph{approximation}, that is a morphism $f: M\rightarrow B$ (resp. $B\rightarrow M$), where $M \in \M$ such that every morphism from $\M$ to $B$ (resp. from $B$ to $\M$) factors through $f$.

A subcategory $\M$ of $\B$ is called $n$-\emph{rigid} if
$$\forall i\in \{1,2,...,{n-1}\}, \, \Ext^i_{\B}(\M,\M)=0.$$

$\M$ is called $n$-\emph{cluster tilting}, if it satisfies the following conditions:

\begin{enumerate}
\item $\mathcal{M }$ is contravariantly finite and covariantly finite in $\B$,
\item $X\in \mathcal{M }$ if and only if $\Ext^i_{\B}(X,\mathcal{M })=0$, $\forall i\in \{1,2,...,{n-1}\}$,
\item $X\in \mathcal{M }$ if and only if $\Ext^i_{\B}(\mathcal{M },X)=0$, $\forall i\in \{1,2,...,{n-1}\}$.
\end{enumerate}

Clearly all $n$-cluster tilting subcategories are $n$-rigid. The unique 1-cluster tilting subcategory is $\B$. By definition, if $\M$ is $n$-cluster tilting, then ${\mathcal P} \subseteq \M$, ${\mathcal I} \subseteq \M$.

\subsection{The categories of modules over (co-)stable subcategories of $\B$}

For any object $X,Y\in \B$ and a full subcategory $\mathcal C$ of $\B$, denote by $[{\mathcal C}](X,Y)$ the set of morphisms in $\Hom_{\B}(X,Y)$ which factor through objects of $\mathcal C$. If $\mathcal{P} \subseteq \mathcal{C}$ (resp. $\mathcal{I} \subseteq \mathcal{C}$), the \emph{(co-)stable category} $\uC$ (resp. $\oC$) of $\C$ is the quotient category $\C/[{\mathcal P}]$ (resp. $\C/[{\mathcal I}]$), i.e. the category which has the same objects than $\C$ and morphisms are defined as
$$\underline \Hom_{\C}(X,Y):=\Hom_{\C}(X,Y)/[{\mathcal P}](X,Y)$$
$$\text{(resp. }\overline \Hom_{\C}(X,Y):=\Hom_{\C}(X,Y)/[{\mathcal I}](X,Y)\text{)}.$$
We denote the residue class of any morphism $f\in \Hom_{\C}(X,Y)$ by $\underline f$ (resp. $\overline f$) in $\uC$ (resp. $\oC$). Denote by $\Mod {\mathcal C}$ the category of contravariant additive functors from ${\mathcal C}$ to $\mod k$ for any category $\mathcal C$. Let $\mod {\mathcal C}$ be the full subcategory of $\Mod{\mathcal C}$ consisting of objects $A$ admitting an exact sequence:
$${\Hom_{\mathcal C}(-,C_1)}\s{\beta}{\rightarrow}{\Hom_{\mathcal C}(-,C_0)}\s{\alpha}\rightarrow A\rightarrow0$$
where $C_0,C_1\in \mathcal C$.

\begin{rem}
 $\Hom_{\mathcal C}(-,C)$ is projective in $\Mod \mathcal C$ for any $C\in \mathcal C$.
\end{rem}

\begin{lem}\label{2.2}
For any contravariantly finite subcategory $\mathcal C$ of $\B$ which contains $\mathcal P$, $\mod\underline {\mathcal C}$ is an abelian category.
\end{lem}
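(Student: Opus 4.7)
The plan is to invoke the classical criterion (due in essence to Auslander) that for any additive category $\mathcal{A}$, the category $\mod \mathcal{A}$ is abelian if and only if $\mathcal{A}$ admits weak kernels (pseudo-kernels). Under this reduction the lemma becomes: every morphism in $\underline{\mathcal{C}}$ admits a weak kernel. Given $\underline{f} \colon X \to Y$ in $\underline{\mathcal{C}}$ with representative $f \colon X \to Y$ in $\mathcal{C}$, I would first choose a deflation $p \colon P \to Y$ with $P$ projective, which exists because $\B$ has enough projectives and belongs to $\mathcal{C}$ because $\mathcal{P} \subseteq \mathcal{C}$. The morphism $\svech{f}{p} \colon X \oplus P \to Y$ is then itself a deflation: its precomposition with the split inclusion $\svecv{0}{1} \colon P \to X \oplus P$ equals $p$, and in an exact category a morphism through which a deflation factors is a deflation. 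Writing its kernel as $\svecv{u}{v} \colon K \to X \oplus P$ and using that $K$ need not lie in $\mathcal{C}$, I use contravariant finiteness of $\mathcal{C}$ to pick a right $\mathcal{C}$-approximation $c \colon C \to K$ and propose $\underline{g} := \underline{uc} \colon C \to X$ as the weak kernel.

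The verification splits into two steps. First, from $\svech{f}{p}\svecv{u}{v} = 0$ one gets $fu + pv = 0$, so $fg = fuc = -pvc$ factors through $P \in \mathcal{P}$; hence $\underline{f}\,\underline{g} = 0$. Second, given $h \colon Z \to X$ in $\mathcal{C}$ with $\underline{f}\,\underline{h} = 0$, the composition $fh$ factors as $fh = \alpha\beta$ with $\beta \colon Z \to P'$, $\alpha \colon P' \to Y$ and $P'$ projective. Projectivity of $P'$ applied to the deflation $p$ produces $\gamma \colon P' \to P$ with $p\gamma = \alpha$, whence $\svech{f}{p}\svecv{h}{-\gamma\beta} = fh - p\gamma\beta = 0$. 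Therefore $\svecv{h}{-\gamma\beta}$ factors through $\svecv{u}{v}$ as $\svecv{u}{v}k$ for some $k \colon Z \to K$, and the approximation property of $c$ (using $Z \in \mathcal{C}$) provides $\ell \colon Z \to C$ with $k = c\ell$. Then $g\ell = uc\ell = uk = h$, so $\underline{h} = \underline{g}\,\underline{\ell}$.

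The main obstacle I expect is this last step. The honest kernel $K$ need not lie in $\mathcal{C}$, so one cannot use a literal universal property there; the weak kernel is obtained only after replacing $K$ by a right $\mathcal{C}$-approximation. The subtle point is that the vanishing $\underline{f}\,\underline{h} = 0$ a priori produces a factorization through \emph{some} projective $P'$, not through the chosen cover $P$, so one must first lift $\alpha$ along $p$ before the kernel property of $K$ becomes applicable. This is precisely where $\mathcal{P} \subseteq \mathcal{C}$, the enough-projectives hypothesis, and the contravariant finiteness of $\mathcal{C}$ all interact simultaneously; once they are combined in this way the rest is formal bookkeeping, and the abelianness of $\mod \underline{\mathcal{C}}$ drops out of the general criterion.
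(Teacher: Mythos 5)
Your proposal is correct and takes essentially the same approach as the paper: both reduce to exhibiting pseudokernels in $\underline{\mathcal C}$, both build the short exact sequence $0\to L\to X\oplus P\to Y\to 0$ (you by showing $\svech{f}{p}$ is a deflation directly, the paper via a pull-back of a projective cover of $Y$ along $f$, which yields the same object $L$), both then pass to a right $\mathcal C$-approximation of $L$, and both verify the pseudokernel property by factoring $fh$ through a projective and lifting along the deflation. The only cosmetic difference is the packaging of the step producing $L$; the appeal to weak idempotent completeness (implicit in Krull--Schmidt) to see that $\svech{f}{p}$ is a deflation is sound here.
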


\begin{proof}
It is enough to show that $\underline {\mathcal C}$ has pseudokernels (see \cite[{\S2}]{3}). Consider a morphism $f \in \Hom_{\B}(C_1,C_0)$. Let $$0\rightarrow K\rightarrow P_0\s{f'}\rightarrow C_0\rightarrow 0$$ be a short exact sequence with $P_0$ projective. Then there is a commutative diagram of exact sequences:
$$\xymatrix{
0 \ar[r] &K \ar@{=}[d] \ar[r] &L \ar[d]^{g} \ar[r]^{g'} &C_1 \ar[d]^{f} \ar[r] &0\\
0 \ar[r] &K \ar[r] &P_0 \ar[r]^{f'} &C_0 \ar[r] &0}$$
where the right square is a pull-back and a short exact sequence:

$$0 \rightarrow L \xrightarrow{\svecv{-g'}{g}} C_1 \oplus P_0 \xrightarrow{\svech{f}{f'}} C_0 \rightarrow 0.$$

Let $k:C_L\rightarrow L$ be a right ${\mathcal C}$-approximation of $L$. We claim that $\underline {g'k}$ is a pseudokernel of $\underline f$. If $\underline {fh}=0$ for a morphism $\underline h \in \underline \Hom_{\B}(C,C_1)$ with $C \in \mathcal C$, then there exists an commutative diagram:
$$\xymatrix{
C \ar[d]^h \ar[r]^{h_1} &P \ar[d]^{h_2}\\
C_1 \ar[r]^f &C_0
}$$
where $P$ is projective. As $f'$ is a deflation, there exists a morphism $h':P\rightarrow P_0$ such that $f'h'=h_2$. Since $$\vecv{h}{-h'h_1} \in \Hom_{\B}(C,C_1\oplus P_0)$$ and $$\vech{f}{f'}\circ\vecv{h}{-h'h_1}=fh-f'h'h_1=fh-h_2h_1=0,$$ $\vecv{h}{-h'h_1}$ factors through $\vecv{-g'}{g}$. Since $k$ is a right ${\mathcal C}$-approximation of $L$, we have the following commutative diagram:
$$\xymatrix{
&C \ar[ld] \ar[d] \ar[rd]^-{\svecv{h}{-h'h_1}}\\
C_L \ar[r]_k &L \ar[r]_-{\svecv{-g'}{g}} &{C_1\oplus P_0}}$$
and $\underline h$ factors though $\underline {g'k}$.
\end{proof}

\section{Main results}

\subsection{Quotient category of $\M_L$ by a rigid subcategory $\M$}

In this subsection, we assume that $\M$ is a rigid subcategory of $\B$ which contains $\mathcal P$. Recall that $\M_L$ is the subcategory of objects $X$ of $\B$ which admit a short exact sequence $0\rightarrow X\xrightarrow{d^0}M^0\xrightarrow{d^1} M^1\rightarrow 0$
where $M^0,M^1\in \M$. Now we consider the functor
\begin{align*}
H: \text{ }&{\M_L}\rightarrow \Mod\uM\\
   &X\mapsto \Ext^1_{\B}(-,X)|_{\M}
\end{align*}

From now on, we will consider the quotient category $\M_L/[\M]$ and we will denote by $[f]$ the residue class in $\M_L/[\M]$ of any morphism $f$ of $\M_L$.

Let $\pi:{\M_L} \rightarrow{\M_L / [\M]}$ be the projection functor. By definition of a rigid subcategory, $HX=0$ if $X \in \M$. Hence, by the universal property of $\pi$, there exists a functor $F: \M_L/[\M] \rightarrow \Mod \uM$ such that the following diagram commutes:
$$\xymatrix{
{\M_L} \ar[d]^{\pi} \ar[rd]^H\\
{\M_L/[\M]} \ar@{.>}[r]^-F &{\Mod\uM}.}$$

\begin{lem}\label{2.4}
For any short exact sequence
$$0\rightarrow X\xrightarrow{d^0}M^0\xrightarrow{d^1} M^1\rightarrow 0$$
where $M^0,M^1\in \M$, there is an exact sequence in $\Mod\uM$
$$\underline \Hom_{\M}(-,M^0)\rightarrow \underline \Hom_{\M}(-,M^1)\rightarrow HX\rightarrow 0.$$
Thus, $FX=HX \in \mod\uM$.
\end{lem}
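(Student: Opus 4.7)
The plan is to apply $\Hom_\B(M,-)$ to the given short exact sequence for each $M \in \M$, and track how the resulting long exact sequence descends to a sequence of functors on $\uM$.

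First, for any $M \in \M$, since $\B$ has enough projectives, the standard long exact sequence
$$\Hom_\B(M, M^0) \xrightarrow{d^1_\ast} \Hom_\B(M, M^1) \to \Ext^1_\B(M, X) \to \Ext^1_\B(M, M^0)$$
is exact. Because $\M$ is rigid, the rightmost term vanishes, so we obtain an exact sequence
$$\Hom_\B(M, M^0) \to \Hom_\B(M, M^1) \to \Ext^1_\B(M, X) \to 0.$$

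Second, I would observe that $\Ext^1_\B(-,X)|_\M$ factors through $\uM$: any morphism factoring through a projective $P \in \mathcal P \subseteq \M$ induces zero on $\Ext^1_\B(-,X)$, since $\Ext^1_\B(P,X)=0$. Thus $HX$ is a well-defined object of $\Mod \uM$, and $FX = HX$ follows from the construction of $F$ via the universal property of $\pi$.

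The key step is to show that after passing to the quotient by $[\mathcal P]$, the sequence
$$\underline\Hom_\M(M, M^0) \to \underline\Hom_\M(M, M^1) \to \Ext^1_\B(M, X) \to 0$$
remains exact. The only nontrivial point is that $[\mathcal P](M, M^1)$ is already contained in the image of $d^1_\ast$. For this, consider $f:M\to M^1$ factoring as $f = \alpha\beta$ with $\beta:M\to P$, $\alpha:P\to M^1$, and $P$ projective. Since $d^1$ is a deflation and $P$ is projective, there exists $\alpha':P\to M^0$ with $d^1\alpha' = \alpha$, whence $f = d^1(\alpha'\beta)$ lies in $\Im(d^1_\ast)$. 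This is the crux of the argument; once it is in place, taking cokernels term by term preserves exactness on the right.

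Finally, naturality in $M$ is automatic from the functoriality of $\Hom_\B(-,M^i)$ and $\Ext^1_\B(-,X)$, so the above gives an exact sequence of functors
$$\underline\Hom_\M(-, M^0) \to \underline\Hom_\M(-, M^1) \to HX \to 0$$
in $\Mod\uM$. This exhibits $HX$ as finitely presented, so $FX = HX \in \mod \uM$. I do not expect any serious obstacle; the only subtlety is the third step, but it reduces to the elementary lifting property of projectives against deflations.
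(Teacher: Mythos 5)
Your proof is correct and takes essentially the same route as the paper: apply $\Hom_\B(M,-)$ and use rigidity to truncate the long exact sequence, then descend to $\uM$ by observing that any $g \in [\mathcal P](M,M^1)$ lifts along the deflation $d^1$, so $[\mathcal P](M,M^1) \subseteq \Im \Hom_\B(M,d^1)$ and the connecting map $\delta$ kills it. Your extra remark that $\Ext^1_\B(P,X)=0$ makes $HX$ well-defined on $\uM$ is a harmless unpacking of what the paper leaves implicit in the definition of $H$, but the core ideas coincide.
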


\begin{proof}
 Applying $\Hom_{\B}(M,-)$ where $M\in \M$ to the short exact sequence, we get a long exact sequence:
 $$\Hom_{\B}(M,M^0)\xrightarrow{\Hom_{\B}(M,d^1)} \Hom_{\B}(M,M^1)\xrightarrow{\delta} \Ext^1_{\B}(M,X)\rightarrow 0.$$
 This means that the following sequence in $\mod{\M}$ is exact:
 $$\Hom_{\M}(-,M^0)\rightarrow \Hom_{\M}(-,M^1)\rightarrow HX \rightarrow 0.$$

 As $d^1$ is a deflation, any morphism $g \in [{\mathcal P}](M,M^1)$ factors through $d^1$. Hence $\delta(g)=0$. Thus $\delta$ induces a morphism $\underline \delta:\underline \Hom_{\B}(M,M^1)\rightarrow HX(M)=\Ext^1_{\B}(M,X)$: $\underline \delta(\underline f)=\delta(f)$. For any $M\in \M$, $\underline \delta$ is surjective. Moreover, since $\delta \circ \Hom_{\B}(M,d^1)=0$, we get $\underline \delta \circ \underline {\Hom}_{\B}(M,d^1)=0$. If $\underline \delta(\underline g)=0$ for a morphism $g \in \Hom_{\B}(M,M^1)$, then by definition $\delta(g)=0$, there exists a morphism $h \in \Hom_{\B}(M,M^0)$ such that $g=fh$. Hence $\underline g=\underline {fh}$. Thus we get an exact sequence in $\mod\uM$
 $$\underline \Hom_{\M}(-,M^0)\rightarrow \underline \Hom_{\M}(-,M^1)\rightarrow HX\rightarrow 0.$$

 The last argument can be understood as the right exactness of the functor $\uM \otimes_\M - : \mod \M \rightarrow \mod \uM$.
\end{proof}

\begin{thm}\label{2.5}
The functor $F:$ $\M_L/[\M] \rightarrow \mod\uM$ is an equivalence of categories.
\end{thm}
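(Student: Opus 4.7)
The plan is to verify separately that $F$ is essentially surjective, faithful, and full (Lemma~\ref{2.4} already ensures $FX\in\mod\uM$). For essential surjectivity, given $A\in\mod\uM$ with presentation $\uHom_\M(-,M^0)\xrightarrow{\underline\beta_*}\uHom_\M(-,M^1)\to A\to 0$, I would lift $\underline\beta$ to $\beta\colon M^0\to M^1$ in $\B$ via Yoneda. To replace $\beta$ by a deflation, choose a projective cover $p\colon P\to M^1$ and consider $(\beta,p)\colon M^0\oplus P\to M^1$: the pullback axiom for exact categories, applied to the deflation $p$ against $\beta$, exhibits this map (up to signs and a swap of summands) as a deflation. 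Set $X=\ker(\beta,p)$; since $\mathcal P\subseteq\M$, we have $X\in\M_L$. Because $\uHom_\M(-,P)=0$ in $\Mod\uM$, the presentation of $FX$ from Lemma~\ref{2.4} collapses to the chosen presentation of $A$, whence $FX\cong A$.

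For faithfulness, suppose $f\colon X\to Y$ in $\M_L$ satisfies $F[f]=H(f)=0$, and fix a defining sequence $0\to X\xrightarrow{d^0}M^0\xrightarrow{d^1}M^1\to 0$ for $X$. The class $\eta\in\Ext^1_\B(M^1,X)=HX(M^1)$ it represents is annihilated by $f_*$, so the pushout along $f$ splits. The resulting pushout diagram has bottom row $0\to Y\to Y\oplus M^1\to M^1\to 0$, and projecting the middle map $M^0\to Y\oplus M^1$ onto $Y$ yields some $g\colon M^0\to Y$ with $f=g\circ d^0$; hence $f\in[\M]$.

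Fullness is the main obstacle. Given $\phi\colon FX\to FY$, I would lift it to a chain map between the presentations of $FX$ and $FY$ coming from Lemma~\ref{2.4}. The projectivity of the representable $\uHom_\M(-,M^1_X)$ in $\Mod\uM$, combined with surjectivity of $\uHom_\M(-,M^1_Y)\to FY$, produces $\alpha_1$; the vanishing of $\alpha_1\circ d_X$ into $FY$ then allows a further lift to $\alpha_0$. Yoneda converts $(\alpha_0,\alpha_1)$ into morphisms $\underline{a}_i\colon M^i_X\to M^i_Y$ with $\underline{a_1 d^1_X}=\underline{d^1_Y a_0}$ in $\uM$; after choosing lifts $a_i$ in $\B$, the difference $a_1 d^1_X-d^1_Y a_0$ factors through some projective $P'$ as $h_2 h_1$. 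The delicate rectification step then uses that $d^1_Y$ is a deflation and $P'$ is projective to produce $h'_2\colon P'\to M^0_Y$ with $d^1_Y h'_2=h_2$; replacing $a_0$ by $a_0+h'_2 h_1$ (without altering $\underline{a}_0$) makes the square commute strictly in $\B$. The kernel universal property produces $f\colon X\to Y$ fitting into the three-term commutative diagram, and tracing $F$ through the chain-map construction confirms $F[f]=\phi$.
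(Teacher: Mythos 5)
Your outline is correct, and two of its three parts coincide with the paper's argument: the essential-surjectivity step (lift $\underline\beta$ by Yoneda, form the pullback against a projective cover of the target, take the kernel $X$ of the resulting deflation, observe $\uHom_\M(-,P)=0$ makes the two presentations agree) is, up to a relabelling of indices, exactly the paper's ``density'' argument; and the fullness step (lift $\phi$ along the presentations using projectivity of the representables, apply Yoneda, rectify the resulting square that only commutes modulo projectives by lifting the factorization through $P'$ along the deflation $d^1_Y$, then use the kernel property to descend to $f\colon X\to Y$) is the paper's proof essentially verbatim.

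Where you genuinely depart from the paper is faithfulness. The paper completes a chain map over the defining sequences of $X$ and $Y$, passes to $\Mod\uM$, and extracts the factorization from a null-homotopy via Yoneda plus a rectification identical in spirit to the one in the fullness proof. You instead invoke the concrete meaning of $H$: the class $\eta=[0\to X\to M^0\to M^1\to 0]$ lies in $HX(M^1)=\Ext^1_\B(M^1,X)$, and $H(f)=0$ says precisely that $f_*\eta=0$, i.e.\ the pushout of $\eta$ along $f$ splits; the inclusion $Y\to Y\oplus M^1$ then forces the pushout map $g'\colon M^0\to Y\oplus M^1$ to have first component $g$ with $g\circ d^0=f$, so $f\in[\M]$. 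This is shorter and more conceptual, trading the Yoneda/rectification machinery for the identification of $\Ext^1$ with extension classes (which the paper sets up in its preliminaries), and it avoids having to construct a full lift of $f$ to a chain map of the two defining sequences. Both routes are valid; the paper's has the advantage of using the same rectification pattern as its fullness proof, while yours is arguably the more transparent explanation of \emph{why} $F$ kills exactly $[\M]$.
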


\begin{proof}
$\bullet$~Let us prove that $F$ is full:\\
Let $X,Y\in \M_L$ and $\alpha \in \Hom_{\mod\uM}(FX,FY)$. Consider the following short exact sequences:
$$0\rightarrow X\xrightarrow{d^0} M^0\xrightarrow{d^1} M^1\rightarrow 0$$
$$0\rightarrow Y\xrightarrow{e^0} {N^0}\xrightarrow{e^1} {N^1}\rightarrow 0$$
where $M^0,M^1,N^0,N^1\in \M$. By Lemma \ref{2.4}, and because $\Hom_{\M}(-,M^1)$ is projective in $\Mod\uM$, we get the following diagram with exact rows in $\mod{\uM}$:
$$\xymatrix{
\underline\Hom_{\M}(-,M^0) \ar[d]^{\nu_0} \ar[rr]^-{\underline\Hom_{\M}(-,\underline {d^1})} &&{\underline\Hom_{\M}(-,M^1)} \ar[d]^{\nu_1} \ar[r] &HX \ar[d]^{\alpha} \ar[r] &0\\
\underline\Hom_{\M}(-,{N^0}) \ar[rr]^-{\underline\Hom_{\M}(-,\underline {e^1})} &&{\underline\Hom_{\M}(-,N^1)} \ar[r] &HY \ar[r] &0.
}$$
By Yoneda's Lemma, for $i=0,1$, there is a morphism $\underline{f^i} \in \uHom_\M (M^i, N^i)$ such that $\nu_i=\underline\Hom_{\M}(-,\underline {f^i})$ and $\underline {e^1 f^0}=\underline {f^1d^1}$. Then there exists a projective $P \in \M$, $a \in \Hom_\M(M^0, P)$ and $b \in \Hom_\M(P, N^1)$ such that $f^1 d^1 - e^1 f^0 = ba$. Thus, as $e^1$ is a deflation, there exists $c \in \Hom_\M(P, N^0)$ such that the following diagram commutes:
$$\xymatrix{
&&M^0 \ar[dl]_a \ar[dd]^{f^1d^1-e^1f^0}\\
&P \ar[dr]^b \ar[dl]_c\\
N^0 \ar[rr]^-{e^1} &&N^1.}$$
Let $g^0=f^0+ca$, then $\underline {g^0} = \underline {f^0}$ and $e^1 g^0=f^1 d^1$. Therefore, there exists $f: X\rightarrow Y$ such that $e^0 f= g^0d^0$, and $\alpha=F([f])$.

$\bullet$~Let us prove that $F$ is faithful:\\
Let $f:X\rightarrow Y$ be a morphism in $\M_L$. Since $d^0$ is a left $\M$-approximation, there exists a commutative diagram of short exact sequences
$$\xymatrix{
0 \ar[r] &X \ar[d]^f \ar[r]^{d^0} &M^0 \ar[d]^{f^0} \ar[r]^{d^1} &M^1 \ar[d]^{f^1} \ar[r]&0\\
0 \ar[r] &Y \ar[r]^{e^0} &{N^0} \ar[r]^{e^1} &{N^1} \ar[r] &0,}$$
and a commutative diagram in $\mod\uM$
$$\xymatrix{
\underline\Hom_{\M}(-,M^0) \ar[d]_{\underline\Hom_{\M}(-,\underline{f^0})} \ar[rr]^-{\underline\Hom_{\M}(-,\underline{d^1})} &&{\underline\Hom_{\M}(-,M^1)} \ar[d]^{\underline\Hom_{\M}(-,\underline{f^1})} \ar[r] &HX \ar[d]^{H(f)} \ar[r] &0\\
\underline\Hom_{\M}(-,{N^0}) \ar[rr]_-{\underline\Hom_{\M}(-,\underline{e^1})} &&{\underline\Hom_{\M}(-,\underline N^1)} \ar[r] &HY \ar[r] &0.}$$
Assume that $F(\underline f)=0$. Then $H(f)=0$ and thus, there exists a morphism $\beta:\underline\Hom_{\M}(-, M^1)\rightarrow \underline\Hom_{\M}(-,N^0)$ such that $\underline\Hom_{\M}(-,\underline{f^1})=\underline\Hom_{\M}(-,\underline{e^1}) \circ \beta$. By Yoneda's Lemma, there exists a morphism $g: M^1\rightarrow N^0$ such that $\beta=\underline\Hom_{\M}(-,\underline g)$ and $\underline {f^1}=\underline{e^1 g}$. Then, as before, we can complete the following commutative diagram
$$\xymatrix{
&&M^1 \ar[dl]_{a'} \ar[dd]^{f^1-e^1 g}\\
&P' \ar[dr]^{b'} \ar[dl]_{c'}\\
N^0 \ar[rr]^-{e^1} &&N^1}$$
where $P'$ is projective. Hence $e^1(c'a'+g)d^1 = f^1 d^1 = e^1 f^0$ and there exists a morphism $g':M^0\rightarrow Y$ such that $e^0g'=f^0-(c'a'+g)d^1$ and $e^0(f-g'd^0)=0$. Since $e^0$ is monic, $f=g'd^0$, thus $[f]=0$ in $\M_L/[\M]$.

$\bullet$~Let us prove that $F$ is dense:\\
For any object $C \in\mod\uM$, there is an exact sequence in $\mod\uM$:
$${\underline \Hom_{\M}(-,M_1)}\xrightarrow{\beta} {\underline \Hom_{\M}(-,M_0)} \rightarrow C\rightarrow 0$$
where $M_0,M_1 \in \M$. Since $$\Hom_{\mod\uM}(\underline \Hom_{\M}(-,M_1),\underline \Hom_{\M}(-,M_0)) \simeq \underline \Hom_{\M}(M_1,M_0)$$
by Yoneda's Lemma, there exists $f:M_1\rightarrow M_0$ such that $\beta =\underline \Hom_{\M}(-,\underline f)$.
Let $$0\rightarrow K\rightarrow P_0\xrightarrow{f'} M_0\rightarrow 0$$ be a short exact sequence with $P_0$ projective. From the proof of Lemma \ref{2.2}, there exists a short exact sequence
$$0\rightarrow L\rightarrow M_1\oplus P_0\xrightarrow{\svech{f}{f'}} M_0\rightarrow 0.$$

Therefore $L\in \M_L$ and there is an exact sequence:
$$\underline \Hom_{\M}(-,M_1)\xrightarrow{\beta}\underline \Hom_{\M}(-,M_0)\rightarrow FL\rightarrow0.$$

Since both $FL$ and $C$ are cokernels of $\beta$, $C \simeq FL$.
\end{proof}

By this theorem and Lemma \ref{2.2}, we get the following corollary:
\begin{cor} \label{ab}
If $\M$ is rigid and contravariantly finite, then $\M_L/[\M]$ is abelian.
\end{cor}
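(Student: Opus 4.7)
The corollary is essentially immediate from the two preceding results and requires only assembling them correctly, so my plan is short. The hypotheses on $\M$ in the ambient subsection already include rigidity and the containment $\mathcal{P}\subseteq\M$; the new input from the corollary's hypothesis is only that $\M$ is contravariantly finite.

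First, I would invoke Lemma \ref{2.2} with $\mathcal{C}=\M$. Because $\M$ is contravariantly finite in $\B$ and contains $\mathcal{P}$ (by the standing assumption of the subsection), the lemma applies verbatim and gives that $\mod\uM$ is an abelian category. The proof of Lemma \ref{2.2} is where the real work lies, namely exhibiting pseudokernels in $\uM$ via a pull-back construction along a projective cover together with a right $\M$-approximation; but that has already been done, so here I just quote it.

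Second, I would apply Theorem \ref{2.5}, which asserts that the functor $F\colon \M_L/[\M]\to\mod\uM$ is an equivalence of categories. Since abelianness is preserved under equivalences of additive categories, the abelian structure on $\mod\uM$ transports to $\M_L/[\M]$.

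There is no real obstacle; the only point worth checking is that the hypotheses of Lemma \ref{2.2} are satisfied by $\M$ itself (not merely by $\M_L$), which is clear from the standing assumption $\mathcal{P}\subseteq\M$ and the added hypothesis of contravariant finiteness. In particular I do not need to verify directly that $\M_L$ is abelian or that the quotient functor $\pi\colon\M_L\to\M_L/[\M]$ is well-behaved: the equivalence $F$ does that work for me.
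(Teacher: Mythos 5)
Your proposal matches the paper's own argument exactly: the paper derives the corollary directly from Theorem~\ref{2.5} (the equivalence $F\colon\M_L/[\M]\to\mod\uM$) together with Lemma~\ref{2.2} applied to $\mathcal C=\M$, which gives that $\mod\uM$ is abelian. No issues.
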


\subsection{Quotient category of $\M_R$ by $\Omegab \M$}

In this subsection we assume that $\M$ is a rigid subcategory of $\B$ which contains $\mathcal I$. Recall that $\M_R$ is the subcategory of objects $Y$ which admit a short exact sequence $$0\rightarrow N_1\xrightarrow{d_1'}N_0\xrightarrow{d_0'} Y\rightarrow 0$$ where $N_0$, $N_1\in \M$ and $\Omegab \M$ the full subcategory of objects $X\in {\B}$ such that there exists a short exact sequence $0\rightarrow M\rightarrow I\rightarrow X\rightarrow 0$
where $M\in {\M}, I \in {\mathcal I} $.

We denote
\begin{align*}
  K:{\M_R} & \rightarrow \Mod\oM \\
  X &\mapsto \overline \Hom_{\B}(-,X)|_{\oM}.
\end{align*}

 Let $\pi':\M_R\rightarrow {\M_R / [\Omegab \M]}$ be the projection functor. By the universal property of $\pi'$, there is a functor $G:{\M_R / [\Omegab \M]}\rightarrow \Mod{\oM}$ which makes the following diagram commute:
$$\xymatrix{
{\M_R} \ar[d]^{\pi'} \ar[rd]^{K}\\
\M_R/[\Omegab \M] \ar@{.>}[r]^-{G} &{\Mod\oM}.}$$

\begin{lem}\label{2.6}
For every $X\in\M_R$, $G(X)=K(X)\in \mod\oM$. More precisely, for every short exact sequence
$$0\rightarrow M_1\s{d_1}\rightarrow  M_0\s{d_0}\rightarrow  X\rightarrow 0$$
where $M_1,M_0 \in \M$, there is an exact sequence
$$K(M_1)\xrightarrow{K({d_1})} K(M_0)\xrightarrow{K({d_0})} K(X)\rightarrow 0.$$
\end{lem}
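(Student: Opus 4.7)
The plan is to mimic the proof of Lemma \ref{2.4}, but now replacing the connecting map to $\Ext^1$ by a direct analysis of $\Hom(M,X)$ modulo $[\mathcal I]$. Concretely, I would fix $M\in\M$ and apply $\Hom_{\B}(M,-)$ to the short exact sequence $0\to M_1\xrightarrow{d_1}M_0\xrightarrow{d_0}X\to 0$, obtaining the long exact sequence
$$\Hom_{\B}(M,M_1)\xrightarrow{(d_1)_*}\Hom_{\B}(M,M_0)\xrightarrow{(d_0)_*}\Hom_{\B}(M,X)\to\Ext^1_{\B}(M,M_1).$$
Since $M,M_1\in\M$ and $\M$ is rigid, $\Ext^1_{\B}(M,M_1)=0$, so $(d_0)_*$ is surjective and we get a three-term exact sequence at the level of $\Hom$.

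Next I would check that $(d_1)_*$ and $(d_0)_*$ descend to the quotients by $[\mathcal I]$: any factorization of a morphism through an injective object is preserved under post-composition with $d_1$ (resp. $d_0$), so $[\mathcal I]$ is carried into $[\mathcal I]$. This gives a well-defined sequence
$$\overline\Hom_{\B}(M,M_1)\xrightarrow{\overline{(d_1)_*}}\overline\Hom_{\B}(M,M_0)\xrightarrow{\overline{(d_0)_*}}\overline\Hom_{\B}(M,X)\to 0,$$
the surjectivity at the right being inherited from $(d_0)_*$ after choosing a representative.

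The main obstacle is exactness in the middle. Take $g\in\Hom_{\B}(M,M_0)$ with $\overline{d_0g}=0$, so $d_0g=l\circ k$ for some $k\colon M\to I$ and $l\colon I\to X$ with $I\in\mathcal I$. Here the trick is to use rigidity in the other direction: since $I\in\mathcal I\subseteq\M$, we have $\Ext^1_{\B}(I,M_1)=0$, so applying $\Hom_{\B}(I,-)$ to the original short exact sequence shows $(d_0)_*\colon\Hom_{\B}(I,M_0)\to\Hom_{\B}(I,X)$ is surjective and $l$ lifts to some $\tilde l\colon I\to M_0$ with $d_0\tilde l=l$. Then $d_0(g-\tilde lk)=0$, so exactness of the original sequence in the middle yields $h\colon M\to M_1$ with $d_1h=g-\tilde lk$. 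Since $\tilde lk\in[\mathcal I](M,M_0)$, we conclude $\overline g=\overline{d_1h}$, as desired.

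Finally, to conclude $G(X)=K(X)\in\mod\oM$, I would note that $K(M_i)=\overline\Hom_{\B}(-,M_i)|_{\oM}=\overline\Hom_{\oM}(-,M_i)$ is representable, hence projective, in $\Mod\oM$, so the exact sequence $K(M_1)\to K(M_0)\to K(X)\to 0$ (obtained by letting $M$ vary in the above) is the required finite presentation. That $G(X)=K(X)$ is immediate from the definition of $G$ via the universal property of $\pi'$.
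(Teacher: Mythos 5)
Your proof is correct and follows essentially the same approach as the paper's: the key step of lifting the injective factorization $l\colon I\to X$ against $d_0$ using $\Ext^1_{\B}(I,M_1)=0$ (since $I\in\mathcal I\subseteq\M$) and then subtracting off the resulting term to land in the image of $d_1$ is exactly the argument used there. The concluding observation that $K(M_i)$ is representable, hence projective in $\Mod\oM$, and that $G=K$ on objects via the universal property of $\pi'$, is also implicit in the paper's statement and proof.
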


\begin{proof}
For any nonzero object $M \in \M$, we have an exact sequence
$$0\rightarrow \Hom_{\B}(M,M_1)\rightarrow \Hom_{\B}(M,M_0)\xrightarrow{\Hom_{\B}(M,d_0)} \Hom_{\B}(M,X)\rightarrow 0$$
by applying $\Hom_{\B}(M,-)$. Hence $d_0$ is a right $\M$-approximation and $K({d_0}): K(M_0) \rightarrow K(X)$ is surjective. If a morphism $h \in \Hom_{\B}(M,M_0)$ satisfies $\overline {d_0h}=0$ in $\oB$, then there exists an object $I \in \mathcal I$ and a commutative diagram
$$\xymatrix{
&&M \ar[r]^a \ar[d]^h &I \ar[d]^b\\
0 \ar[r] &{M_1} \ar[r]^{d_1} &M_0 \ar[r]^{d_0} &X \ar[r] &0.
}$$

Since $I \in \M$, there is a morphism $c:I\rightarrow M_0$ such that $b=d_0c$. Hence $d_0ca=ba=d_0h$, and thus $d_0(h-ca)=0$. Therefore, $h-ca$ factors through $d_1$. Finally $\overline h$ factors through $\overline {d_1}$.
\end{proof}

\begin{thm}\label{2.7}
The functor $G:{\M_R / [\Omegab \M]}\rightarrow \mod{\oM}$ is an equivalence of categories.
\end{thm}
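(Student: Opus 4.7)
The plan is to dualize the proof of Theorem \ref{2.5}, trading deflations for inflations, projectives for injectives, pull-backs for push-outs, and $\mathcal{P}$ for $\mathcal{I}$. The new key ingredient is that $K(I) = \overline{\Hom}_{\M}(-, I) = 0$ for every $I \in \mathcal{I} \subseteq \M$, since any morphism $M \to I$ with $M \in \M$ factors through $I$ itself. Hence Lemma \ref{2.6} presents $GX$ as the cokernel of a map between the projectives $\overline{\Hom}_{\M}(-, M_i)$ in $\mod \oM$. I will verify in turn that $G$ is full, faithful, and dense.

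For fullness, given $\alpha : GX \to GY$ and defining short exact sequences $0 \to M_1 \xrightarrow{d_1} M_0 \xrightarrow{d_0} X \to 0$ and $0 \to N_1 \xrightarrow{e_1} N_0 \xrightarrow{e_0} Y \to 0$, projectivity and Yoneda produce $\overline{f_i} : M_i \to N_i$ in $\oM$ with $\overline{f_0 d_1} = \overline{e_1 f_1}$. Lifting to $\B$, the discrepancy $f_0 d_1 - e_1 f_1$ factors through an injective $I$ as $b a$, and since $d_1$ is an inflation and $I$ injective, $a$ extends to $c : M_0 \to I$ with $c d_1 = a$. Replacing $f_0$ by $g_0 := f_0 - b c$ yields $g_0 d_1 = e_1 f_1$, and the cokernel property of $d_0$ produces $f : X \to Y$ with $f d_0 = e_0 g_0$, whence $G[f] = \alpha$.

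For faithfulness, lift $f$ to a morphism of short exact sequences (using that $d_0$ is a right $\M$-approximation by Lemma \ref{2.6}), with components $f_0, f_1$. The hypothesis $G[f] = 0$ yields $g : M_0 \to N_1$ with $f_0 - e_1 g = b' a'$, where $a' : M_0 \to I'$, $b' : I' \to N_0$ and $I' \in \mathcal{I}$. Then $f d_0 = e_0 b' a'$, and $b' a' d_1 = e_1 \rho$ for some $\rho : M_1 \to N_1$. Take an injective envelope $\iota : M_1 \to I''$ and extend $a' d_1$ along $\iota$ using injectivity of $I'$ to $\psi : I'' \to I'$, so that $\svecv{a' d_1}{\iota} : M_1 \to I' \oplus I''$ equals $\svecv{0}{\iota}$ up to an automorphism of $I' \oplus I''$ and is therefore an inflation. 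Its cokernel $Z$ fits in $0 \to M_1 \to I' \oplus I'' \to Z \to 0$, exhibiting $Z \in \Omegab \M$. Standard diagram chases with $\svecv{a'}{\tilde\iota}$ (for $\tilde\iota : M_0 \to I''$ extending $\iota$ along $d_1$) on one side and $(\rho, \svech{b'}{0})$ on the other produce $\alpha : X \to Z$ and $\beta : Z \to Y$ with $\beta \alpha d_0 = f d_0$, hence $\beta \alpha = f$ as $d_0$ is an epimorphism. So $f$ factors through $Z \in \Omegab \M$ and $[f] = 0$.

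For denseness, given $C \in \mod \oM$ with presentation $\overline{\Hom}_{\M}(-, N_1) \xrightarrow{\beta} \overline{\Hom}_{\M}(-, N_0) \to C \to 0$, Yoneda lifts $\beta$ to $g : N_1 \to N_0$. Pick an injective envelope $\iota : N_1 \to I$ with $I \in \mathcal{I} \subseteq \M$; by the push-out axiom in the preliminaries, $\svecv{g}{\iota} : N_1 \to N_0 \oplus I$ is an inflation whose cokernel $X$ fits in $0 \to N_1 \to N_0 \oplus I \to X \to 0$, so $X \in \M_R$. Then Lemma \ref{2.6} together with $K(I) = 0$ gives $GX \simeq \Coker(\beta) \simeq C$. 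The main obstacle is the faithfulness step: the naive push-out of $d_0$ along $a'$ does not obviously lie in $\Omegab \M$, so $Z$ must be constructed as the cokernel of $\svecv{a' d_1}{\iota}$ using an injective envelope $\iota$ of $M_1$, ensuring both that the defining map is an inflation and that the resulting sequence has the form required by $\Omegab \M$.
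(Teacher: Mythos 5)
Your proposal is correct, and the fullness and denseness arguments are essentially identical to the paper's (same use of Lemma \ref{2.6}, Yoneda, extension of the discrepancy along the inflation $d_1$ into an injective, and the push-out of $d_0$ along an injective envelope of $M_1$ for denseness). Where you diverge from the paper is in faithfulness. You complete $f$ to a morphism of short exact sequences, extract $g$ from $G[f]=0$, build $\rho = f_1 - gd_1$, and then construct $Z$ as the cokernel of $\svecv{a'd_1}{\iota}$ for an injective envelope $\iota$ of $M_1$; this is sound, and your column-operation argument showing $\svecv{a'd_1}{\iota}$ is an inflation, as well as the two morphism-of-SES diagram chases producing $\alpha$ and $\beta$ with $\beta\alpha d_0 = e_0 b' a' = f d_0$, all check out.

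The paper's faithfulness argument is shorter and worth knowing: it pushes out $d_0$ along an injective envelope $i : M_0 \to I_M$ of $M_0$ itself. The point you seem to have overlooked is that $K(f)=0$ gives $fd_0$ factoring through \emph{some} injective $J$, but since $i$ is an inflation into an injective, the map $M_0 \to J$ extends along $i$, so $fd_0$ already factors through $i$ as $fd_0 = ei$. Now Lemma \ref{2.16}, applied to $0 \to M_1 \to M_0 \to X \to 0$ and $0 \to M_0 \xrightarrow{i} I_M \to \Omegab M_0 \to 0$, produces a short exact sequence $0 \to M_1 \to I_M \to N \to 0$, so the push-out $N$ is immediately in $\Omegab\M$, and the universal property of the push-out (applied to $f$ and $e$) factors $f$ through $N$. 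No auxiliary $g$, $\rho$, or second injective envelope is needed. Your closing remark about ``the naive push-out of $d_0$ along $a'$'' is a red herring: the natural thing to push along is not $a'$ but the injective envelope of $M_0$, which makes both the $\Omegab\M$-membership and the factorization of $f$ automatic.
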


\begin{rem}
 In contrast to Corollary \ref{ab}, we cannot conclude that $\M_R / [\Omegab \M]$ is abelian. Indeed, it is unlikely that a result analogous to Lemma \ref{2.2} is true for $\mod \oM$.
\end{rem}

\begin{proof}
Let $X$, $Y \in M_R$. By definition, there exist short exact sequences
$$0\rightarrow M_1\xrightarrow{d_1} M_0\xrightarrow{d_0} X\rightarrow 0$$
$$0\rightarrow N_1\xrightarrow{d_1'} N_0\xrightarrow{d_0'} Y\rightarrow 0.$$

$\bullet$~Let us show that $K$ (and therefore $G$) is full:\\
Let $\alpha:K(X)\rightarrow K(Y)$ be a morphism in $\mod\oM$.
By Lemma \ref{2.6}, and because $K(M_0)$ is projective, we can form the following commutative diagram
$$\xymatrix{
K(M_1) \ar[d]^{\gamma} \ar[r]^{K(d_1)} &K(M_0) \ar[d]^{\beta} \ar[r]^{K(d_0)} &K(X) \ar[d]^{\alpha} \ar[r] &0\\
K(N_1) \ar[r]_{K(d'_1)} &K(N_0) \ar[r]_{K(d'_0)} &K(Y) \ar[r] &0.
}$$

By Yoneda's Lemma, there exist $g:M_0\rightarrow N_0$ and $h:M_1\rightarrow N_1$ such that $\beta=K(g)$ and $\gamma=K(h)$. Hence $\overline {gd_1}=\overline {d_1'h}$, and therefore $gd_1-d_1'h$ factors through some object $I \in \mathcal I$. As $d_1$ is an inflation, we can complete the following commutative diagram:
$$\xymatrix{
M_1 \ar[dr]_a \ar[dd]_-{gd_1-d_1'h} \ar[rr]^{d_1} &&{M_0} \ar[dl]^c\\
&I \ar[dl]^b\\
N_0.}$$
Thus, putting $g'=g-bc$, we get $g'd_1=d_1'h$. Hence there is a morphism $f:X\rightarrow Y$ such that the diagram
$$\xymatrix{
0 \ar[r] &M_1 \ar[d]^h \ar[r]^{d_1} &M_0 \ar[d]^{g'} \ar[r]^{d_0} &X \ar[d]^f \ar[r] &0\\
0 \ar[r] &N_1 \ar[r]^{d_1'} &N_0 \ar[r]^{d_0'} &Y \ar[r] &0}$$
commutes. Finally, $\alpha=K(f)$.

$\bullet$~Let us show that $G$ is faithful:\\
Let $M_0\s{i}\rightarrow I_M$ be an injective envelope of $M_0$. Let $f:X \rightarrow Y$ be a morphism of $\M_R$. If $G([f])=0$ then $K(f)=0$. Hence, $fd_0$ factors through some object in $\mathcal I$. Therefore, we have a commutative diagram
$$\xymatrix{
M_0 \ar[d]_i \ar[r]^{d_0} &X \ar[d]^f\\
I_M \ar[r]_{e} &Y.
}$$
Then, if $N$ is a push-out of $i$ and $d_0$, we can form the commutative diagram
$$\xymatrix{
M_0 \ar[d]_i \ar[r]^{d_0} &X \ar[d]^k \ar@/^/[ddr]^f\\
I_M \ar@/_/[drr]_e \ar[r]^l &N \ar[dr]^r\\
&&Y.}$$
By Lemma \ref{2.16}, there is a short exact sequence
$$0\rightarrow M_1\rightarrow I_M \rightarrow N\rightarrow 0.$$

Hence $N \in \Omegab \M$ and then $[f]=0$ in $\M_R/[\Omegab \M]$.

$\bullet$~Let us show that $G$ is dense:\\
For any object $C \in \mod\oM$, there is an exact sequence
$$K(M_1)\xrightarrow{K(d)}K(M_0)\rightarrow C\rightarrow 0.$$
First, let us form a push-out diagram
$$\xymatrix{
M_1 \ar[d]^s \ar[r]^{d} &M_0 \ar[d]\\
I' \ar[r] &L}$$
where $s$ is an injective envelope of $M_1$. Then
$$0\rightarrow M_1\rightarrow I'\oplus M_0 \rightarrow L\rightarrow 0$$
is a short exact sequence.
By Lemma \ref{2.6}, we get an exact sequence
$$K(M_1)\rightarrow K(M_0)\rightarrow K(L)\rightarrow 0.$$
Hence $C\simeq K(L)=G(L)$.
\end{proof}

If we denote $\oM^\bot=\{X \in {\M_R}\text{ }| \text{ }\overline \Hom_{\B}({\M},X)=0 \}$, we get the following corollary:
\begin{cor}\label{2.17}
$\Omegab \M=\oM^\bot$.
\end{cor}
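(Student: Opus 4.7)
The plan is to prove both inclusions separately.

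For $\Omegab \M \subseteq \oM^\bot$, I would start from $X \in \Omegab \M$ with defining sequence $0 \to M \to I \to X \to 0$ ($M \in \M$, $I \in \mathcal I$). Since $\mathcal I \subseteq \M$, this sequence already witnesses $X \in \M_R$. For each $M' \in \M$, applying $\Hom_\B(M', -)$ yields the exact piece $\Hom_\B(M', I) \to \Hom_\B(M', X) \to \Ext^1_\B(M', M)$, whose last term vanishes by rigidity, so every morphism $M' \to X$ lifts to one into the injective $I$, proving $\overline{\Hom}_\B(\M, X) = 0$.

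For the reverse inclusion $\oM^\bot \subseteq \Omegab \M$, the plan is to construct an explicit short exact sequence of the required form. Given $X \in \oM^\bot$ with witnessing sequence $0 \to N_1 \to N_0 \xrightarrow{d_0} X \to 0$, the hypothesis $\overline{\Hom}_\B(N_0, X) = 0$ says $d_0 \in [\mathcal I](N_0, X)$; via the universal property of the injective envelope $\iota_0 : N_0 \hookrightarrow E(N_0)$, one can therefore write $d_0 = \phi \circ \iota_0$ with $\phi : E(N_0) \twoheadrightarrow X$ a deflation. Setting $M_X := \ker \phi$ produces a candidate sequence $0 \to M_X \to E(N_0) \to X \to 0$ with $E(N_0) \in \mathcal I$, and the whole content of this direction reduces to showing $M_X \in \M$.

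A snake-lemma chase on the map of short exact sequences with verticals $(\gamma, \iota_0, \mathrm{id}_X)$ identifies $M_X$ as an extension $0 \to N_1 \to M_X \to E(N_0)/N_0 \to 0$, where $N_1 \in \M$ and $E(N_0)/N_0 \in \Omegab \M$ (hence in $\oM^\bot$ by the first inclusion). Rigidity of $\M$ makes $d_0$ a right $\M$-approximation since $\Ext^1_\B(\M, N_1) = 0$, so combined with $\overline{\Hom}_\B(\M, X) = 0$ one checks that every morphism from $\M$ to $X$ factors through the specific $\phi$; the long exact sequence for $\Hom_\B(M', -)$ applied to $0 \to M_X \to E(N_0) \to X \to 0$ then forces $\Ext^1_\B(\M, M_X) = 0$. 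The main obstacle, and the heart of the proof, is to convert this Ext-vanishing together with the explicit extension of $M_X$ by elements already controlled in $\M$ and $\Omegab \M$ into the conclusion $M_X \in \M$; my plan is to use Theorem~\ref{2.7} applied to $M_X$ itself, first showing $M_X \in \M_R$ via the pullback of $E(N_0) \twoheadrightarrow E(N_0)/N_0$ along $M_X \twoheadrightarrow E(N_0)/N_0$ together with the splitting forced by injectivity of $E(N_0)$, then verifying $M_X \in \oM^\bot$ by a similar lifting argument, so that $M_X$ becomes a zero object in $\M_R/[\Omegab \M]$; a descent argument on the cosyzygy complexity of $E(N_0)/N_0$ then terminates in $M_X \in \M$.
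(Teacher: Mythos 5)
Your proof of the inclusion $\Omegab\M \subseteq \oM^\bot$ is correct, and in fact more explicit than the paper's terse ``by definition.'' Your reduction of the hard inclusion to showing $M_X := \Ker\phi \in \M$ (for the constructed deflation $\phi : E(N_0) \to X$) is also legitimate, and the intermediate computations---the short exact sequence $0 \to N_1 \to M_X \to E(N_0)/N_0 \to 0$ and the vanishing $\Ext^1_\B(\M, M_X)=0$---are sound.

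The plan to finish, however, cannot work. The corollary is stated for a \emph{rigid} $\M$ (containing $\mathcal I$), not a cluster tilting one, so $\Ext^1_\B(\M,M_X)=0$ is far from implying $M_X\in\M$; there is no maximality to invoke. Running the $G$-faithfulness machine on $M_X$ is both circular (you would be using $\oM^\bot=\Omegab\M$ to establish it) and yields the wrong conclusion: at best you get $M_X\in\Omegab\M$, whereas you need $M_X\in\M$, and these are disjoint classes outside of injectives. The ``descent on cosyzygy complexity'' has no content here---$E(N_0)/N_0=\Omegab N_0$ carries no finite invariant that terminates such an induction. The paper's argument is much shorter and sidesteps the construction entirely: for $X\in\oM^\bot$ one has $K(X)=\overline\Hom_\B(-,X)|_{\M}=0$, hence $G(X)$ is the zero object of $\mod\oM$, so $G([\mathrm{id}_X])=0$; since $G$ is faithful (Theorem~\ref{2.7}), $[\mathrm{id}_X]=0$ in $\M_R/[\Omegab\M]$, meaning $\mathrm{id}_X$ factors through an object of $\Omegab\M$, so $X$ is a direct summand of such an object and hence lies in $\Omegab\M$. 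You should apply faithfulness to $X$ itself rather than to an auxiliary kernel.
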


\begin{proof}
By definition, $\Omegab \M\subseteq \oM^\bot$. If $X\in\oM^\bot$, then $G(X)=K(X)=0$. Since $G$ is faithful, $X\in \Omegab \M$.
\end{proof}

\subsection{Case of $n$-cluster tilting subcategories and AR translation}

In this subsection, we assume that $n>1$ and $\M$ is an $n$-cluster tilting subcategory.

Define
\begin{align*}
 & {}^{\bot_{n-2}} \M = \{ X\in {\B} \,|\, \forall i \in \{1, \dots, n-2\}, \Ext_{\B}^i(X, {\M})=0 \} \\
\text{and} \quad & \M^{\bot_{n-2}} = \{ X\in {\B} \,|\, \forall i \in \{1, \dots, n-2\}, \Ext_{\B}^i({\M}, X)=0 \}
\end{align*}

\begin{rem}\label{Mpex}
 The categories ${}^{\bot_{n-2}} \M$ and $\M^{\bot_{n-2}}$ are extension closed and thus exact subcategories of $\B$.
\end{rem}

\begin{prop}\label{2.3}
The following equalities hold: $${^{\bot_{n-2}}{\M}}=\M_L \quad \text{and} \quad {{\M}^{\bot_{n-2}}}=\M_R.$$
\end{prop}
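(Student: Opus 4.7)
The plan is to prove both equalities by double inclusion; the two halves are dual, so I focus on ${}^{\bot_{n-2}}\M = \M_L$.

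The inclusion $\M_L \subseteq {}^{\bot_{n-2}}\M$ is a direct consequence of $(n-1)$-rigidity: apply $\Hom_\B(-, N)$ with $N \in \M$ to a presentation $0 \to X \to M^0 \to M^1 \to 0$ and read off, from the fragment
$$\Ext^i_\B(M^0, N) \to \Ext^i_\B(X, N) \to \Ext^{i+1}_\B(M^1, N)$$
of the long exact sequence, that $\Ext^i_\B(X, N) = 0$ for $1 \leq i \leq n-2$, since both outer terms vanish in that range.

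For the reverse inclusion, fix $X \in {}^{\bot_{n-2}}\M$. The preliminary step is to produce a left $\M$-approximation of $X$ which is simultaneously an inflation. Choose any left $\M$-approximation $f : X \to M^0$ (it exists by covariant finiteness of $\M$) and an injective envelope $j : X \to I$. Since $j$ is an inflation, the push-out of $f$ along $j$ provides, by the exact-category axioms, a short exact sequence
$$0 \to X \xrightarrow{\svecv{f}{j}} M^0 \oplus I \to P \to 0.$$
Here $M^0 \oplus I \in \M$ because $I \in \mathcal{I} \subseteq \M$, and $\svecv{f}{j}$ remains a left $\M$-approximation: any $h = gf : X \to N$ with $N \in \M$ equals $\svech{g}{0} \circ \svecv{f}{j}$. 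Relabel this inflation as $X \to M^0$ and denote its cokernel by $Y$.

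Now apply $\Hom_\B(-, N)$ for $N \in \M$ to $0 \to X \to M^0 \to Y \to 0$. Since $\Ext^i_\B(M^0, N) = 0$ for $1 \leq i \leq n-1$ by rigidity, the long exact sequence yields $\Ext^1_\B(Y, N) = 0$ (the surjectivity of $\Hom_\B(M^0, N) \to \Hom_\B(X, N)$ needed for this is exactly the approximation property), and, by dimension shift, $\Ext^i_\B(Y, N) \cong \Ext^{i-1}_\B(X, N)$ for $2 \leq i \leq n-1$, which vanish by the hypothesis on $X$. Thus $\Ext^i_\B(Y, N) = 0$ for every $1 \leq i \leq n-1$ and every $N \in \M$, so condition $(2)$ of $n$-cluster tilting forces $Y \in \M$ and exhibits $X$ as an object of $\M_L$.

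The equality $\M^{\bot_{n-2}} = \M_R$ is obtained by exactly dual reasoning: the injective envelope is replaced by a projective cover (available since $\mathcal P \subseteq \M$), the push-out by a pull-back, the inflation by a deflation, and $\Hom_\B(-, N)$ by $\Hom_\B(N, -)$; condition $(3)$ is invoked at the end instead of $(2)$. The main obstacle is the inflation step: in a general exact category a left $\M$-approximation has no reason to be an inflation, so its cokernel need not even be defined in $\B$. The push-out with an injective envelope uses precisely the hypothesis $\mathcal I \subseteq \M$ (and dually $\mathcal P \subseteq \M$), and has no analogue in the abelian or Frobenius setting where every monomorphism is automatically admissible.
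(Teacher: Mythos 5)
Your proof is correct and follows essentially the same strategy as the paper: prove $\M_L\subseteq{}^{\bot_{n-2}}\M$ by a direct long-exact-sequence argument using rigidity, and prove the reverse inclusion by combining a left $\M$-approximation with an injective envelope via the push-out axiom to build an inflation $X\to M^0\oplus I$ whose cokernel is shown to lie in $\M$ by dimension shifting together with condition $(2)$ of the $n$-cluster tilting definition. Your closing remark about why the injective envelope (and $\mathcal I\subseteq\M$) is needed to make the approximation admissible identifies precisely the point the paper's construction is designed to handle.
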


\begin{proof}
We only prove the first equality. Let $X \in {}^{\bot_{n-2}} \M$ and $0\rightarrow X\xrightarrow{d_0} I_0\rightarrow C\rightarrow 0$ be a short exact sequence with $I_0 \in \mathcal I$. By definition of an exact category, we get the following commutative diagram
$$\xymatrix{
0 \ar[r] &X \ar[d]^{d_X} \ar[r]^{d_0} &I_0 \ar[d] \ar[r] &C \ar@{=}[d] \ar[r] &0\\
0 \ar[r] &M_X \ar[r] &C' \ar[r] &C \ar[r] &0}$$
where $d_X$ is a left $\M$-approximation and the first square is a push-out. Moreover, we have a short exact sequence
$$0\rightarrow X\xrightarrow{\svecv{d_X}{d_0}} M_X\oplus I_0\rightarrow C'\rightarrow 0.$$
If $M \in \M$, we have an exact sequence
$$\Hom_{\B}(M_X\oplus I_0,M)\rightarrow \Hom_{\B}(X,M)\rightarrow \Ext^1_{\B}(C',M)\rightarrow 0$$ since $\Ext^1_{\B}(M_X\oplus I_0,M)=0$. Moreover, $\Hom_{\B}(d_X,M)$ is surjective, since $d_X$ is a left $\M$-approximation. Hence $\Ext^1_{\B}(C',M)=0$.

Moreover, applying $\Hom_{\B}(-,M)$  where $M\in \M$ to the former short exact sequence, we get a long exact sequence
\begin{align*}
\cdots &\rightarrow \Ext_{\B}^i(M_X\oplus I_0,M) \rightarrow \Ext_{\B}^i(X,M)\rightarrow \Ext_{\B}^{i+1}(C',M)\\
                                               &\rightarrow \Ext_{\ B}^{i+1}(M_X\oplus I_0,M)\rightarrow \cdots
\end{align*}
for $\forall i\in \{1,2,...,{n-2}\}$. Since $\Ext_{\B}^i(M_X \oplus I_0,M)=\Ext_{\B}^{i+1}(M_X \oplus I_0,M)=0$, we get $\Ext_{\B}^i(X,M)\simeq \Ext_{ B}^{i+1}(C',M)$.

 Thus, as $X \in {^{\bot_{n-2}}{\M}}$, $\Ext^i_{\B}(C',M)=0$ if $1<i<n-1$. As $\M$ is $n$-cluster tilting, $C' \in \M$. Finally, ${}^{\bot_{n-2}} \M \subseteq \M_L$.

Suppose that $X \in \M_L$. Then it admits a short exact sequence
$$0\rightarrow X\rightarrow M^0\rightarrow M^1\rightarrow 0$$
where $M^0,M^1 \in \M$. Applying $\Hom_{\B}(-,M)$ where $M\in \M$, we get a long exact sequence
$$0 = \Ext_{\B}^i(M^0,M) \rightarrow \Ext_{\B}^i(X,M)\rightarrow \Ext_{\B}^{i+1}(M^1,M) = 0$$
for $\forall i\in \{1,2,...,{n-2}\}$. Therefore, $X \in {^{\bot_{n-2}}{\M}}$.
\end{proof}

For any object $M \in \M$, we fix a short exact sequence $0\rightarrow M\rightarrow I_M\rightarrow N\rightarrow 0$ with $I_M \in \mathcal I$ and denote $N$ by $\Omegab M$. Notice that $\Omegab M$ does not depend on $I_M$ as an object of $\overline{\Omegab \M}$. Moreover, if $f:M\rightarrow M'$ is a morphism in $\M$ then we can form a commutative diagram:
$$\xymatrix{
0 \ar[r] &M \ar[d]^f \ar[r] &I_M \ar[d]^g \ar[r] &{\Omegab \M} \ar[d]^h \ar[r] &0\\
0 \ar[r] &M' \ar[r] &I_{M'} \ar[r] &{\Omegab {\M}'} \ar[r] &0}$$
where $\Omegab \overline f:=\overline h$ is uniquely determined by the morphism $\overline f$ in $\oM$. Then we get

\begin{prop}\label{2.10}
With the previous notation, $\Omegab :\oM\rightarrow \overline{\Omegab \M}$ is an equivalence of categories.
\end{prop}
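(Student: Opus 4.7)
The plan is to verify essential surjectivity, fullness, and faithfulness of $\Omegab$ separately, using the rigidity $\Ext^1_\B(\mathcal I, \M) = 0$ (guaranteed by the $n$-cluster tilting hypothesis with $n \geq 2$) at each step.

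For essential surjectivity, any $X \in \Omegab \M$ fits into a short exact sequence $0 \to M \to I \to X \to 0$ with $M \in \M$ and $I \in \mathcal I$, which I compare with the fixed sequence $0 \to M \to I_M \to \Omegab M \to 0$. Forming the pushout of $M \to I$ along $M \to I_M$ produces an object $Y$ sitting in short exact sequences $0 \to I_M \to Y \to X \to 0$ and $0 \to I \to Y \to \Omegab M \to 0$, both of which split since $I$ and $I_M$ are injective. Hence $I_M \oplus X \cong Y \cong I \oplus \Omegab M$, so $X \cong \Omegab M$ in $\overline{\Omegab \M}$.

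For fullness, given $h : \Omegab M \to \Omegab M'$, I pull the deflation $I_{M'} \to \Omegab M'$ back along $h$ to obtain a short exact sequence $0 \to M' \to E \to \Omegab M \to 0$ with class $[E] \in \Ext^1_\B(\Omegab M, M')$. Applying $\Hom_\B(-, M')$ to $0 \to M \to I_M \to \Omegab M \to 0$ and using $\Ext^1_\B(I_M, M') = 0$ (by rigidity, since $I_M, M' \in \M$), the connecting map $\partial : \Hom_\B(M, M') \to \Ext^1_\B(\Omegab M, M')$ is surjective. Choosing $f$ with $\partial f = [E]$ and pasting the resulting pushout comparison onto the pullback diagram yields a morphism of short exact sequences from $0 \to M \to I_M \to \Omegab M \to 0$ to $0 \to M' \to I_{M'} \to \Omegab M' \to 0$ whose left and right columns are $f$ and $h$; thus $\Omegab \overline f = \overline h$.

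For faithfulness, suppose the lift $h : \Omegab M \to \Omegab M'$ of some $f : M \to M'$ factors as $h = \beta \alpha$ through $J \in \mathcal I$, and let $p : I_M \to \Omegab M$, $q : I_{M'} \to \Omegab M'$, $\iota : M' \to I_{M'}$ and $g : I_M \to I_{M'}$ denote the relevant structure maps of the lifting diagram. Rigidity $\Ext^1_\B(J, M') = 0$ lets me lift $\beta$ through $q$ to some $\tilde\beta : J \to I_{M'}$, so $q(g - \tilde\beta \alpha p) = 0$ and therefore $g - \tilde\beta \alpha p = \iota \sigma$ for a unique $\sigma : I_M \to M'$. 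Precomposing with the inflation $M \to I_M$ (annihilated by $p$) and using that $\iota$ is monic yields that $f$ equals the composition $M \to I_M \xrightarrow{\sigma} M'$, so $f$ factors through $I_M \in \mathcal I$ and $\overline f = 0$ in $\oM$. The delicate point in the whole proof is precisely this faithfulness step, where the correction term $\tilde\beta \alpha p$ must be produced from rigidity in order to separate the ``injective part'' of $g$ from the genuine factorization of $f$ through $I_M$; the remaining steps are then essentially exact-category diagram chases.
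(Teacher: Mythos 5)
Your proof is correct. Note that the paper gives no proof at all for Proposition~\ref{2.10}: the construction of $\Omegab$ on objects and morphisms is set up and the proposition is then simply asserted with ``Then we get.'' Your argument supplies exactly the standard diagram chases the authors leave tacit: essential surjectivity via the pushout of two injective copresentations of $M$ (both resulting sequences split, so $X \oplus I_M \cong \Omegab M \oplus I$ and hence $X \cong \Omegab M$ after killing injectives); fullness via surjectivity of the connecting map $\partial\colon \Hom_\B(M,M') \to \Ext^1_\B(\Omegab M, M')$, which uses $\Ext^1_\B(I_M,M')=0$ from rigidity of $\M$; and faithfulness via lifting the injective-factoring of $h$ back to $I_{M'}$ (again using $\Ext^1_\B(J,M')=0$), subtracting the correction $\tilde\beta\alpha p$, and reading off that $f$ factors through $I_M$. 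All three ingredients --- injectivity of $I_M,I_{M'}$, the axiom that pushouts and pullbacks along inflations/deflations exist and interchange, and the $1$-rigidity $\Ext^1_\B(\mathcal I, \M)=0$ --- are available from the hypotheses, and each step is carried out correctly.
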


Now, we assume that $\B$ has an AR translation $\tau: \uB \rightarrow \oB$ with reciprocal $\tau^-$. Following \cite{IyamaHigher}, we define $(n-1)$-AR translations  $$\tau_{n-1}: \underline{{}^{\bot_{n-2}} \mathcal P} \rightarrow \overline{\mathcal I^{\bot_{n-2}}} \quad \text{and} \quad \tau_{n-1}^-: \overline{\mathcal I^{\bot_{n-2}}} \rightarrow \underline{{}^{\bot_{n-2}} \mathcal P}$$ by $\tau_{n-1} = \tau \Omega^{n-2}$ and $\tau_{n-1}^- = \tau \Omegab^{n-2}$ (where $\Omega$ is the syzygy functor). In fact, the only property we need for these functors is that, if $X \in {}^{\bot_{n-2}} \mathcal P$ and $Y \in \mathcal I^{\bot_{n-2}}$, the following functorial isomorphisms hold:
\begin{enumerate}
 \item $\Ext^{n-1}_{\B}(X,Y)\simeq \D\overline\Hom_{\B}(Y,\tau_{n-1} X)\simeq \D\underline\Hom_{\B}(\tau^{-}_{n-1} Y,X)$,
 \item $\forall i\in \{1,2,...,{n-2}\}$, \\ $\Ext^{n-1-i}_{\B}(X,Y)\simeq \D\Ext^i_{\B}(Y, \tau_{n-1}X)\simeq \D\Ext^i_{\B}(\tau^{-}_{n-1}Y, X)$
\end{enumerate}
where $D = \Hom_k(-,k)$. This is a weak version of \cite[Theorem 1.5]{IyamaHigher}.

From this, we deduce easily that $\tau_{n-1}$ induces an equivalence from $\underline {{^{\bot_{n-2}}}\M}$ to $\overline {\M^{\bot_{n-2}}}$ the inverse of which is $\tau_{n-1}^{-1} = \tau_{n-1}^-$.

Remark that
\begin{align*}
X \in \M &\Leftrightarrow \Ext_{\B}^i(X,\M)=0, \text{ } \forall i\in \{1,2,...,{n-1}\}\\
            &\Leftrightarrow
	      \left\{\begin{array}{l}
	              \overline\Hom_{\B}(\M,\tau_{n-1} X)=0 \\
		      \Ext^i_{\B}(\M, \tau_{n-1}X)= 0 \quad \text{for all } i\in \{1,2,...,{n-2}\}
	             \end{array}\right.\\
            &\Leftrightarrow \tau_{n-1}X \in \M^{\bot_{n-2}} \cap \oM^\bot.
\end{align*}

Moreover, as $\oM^\bot = \Omegab \M \subseteq \M^{\bot_{n-2}}$, $X \in \M \Leftrightarrow \tau_{n-1} X \in \Omegab  \M$.

Now $X \in \mathcal P$ implies that $\Ext^{n-1}_{\B}(X,\B)=0$, then $\overline\Hom_{\B}(\B,\tau_{n-1} X)=0$, which means $\tau_{n-1} X \in \mathcal I$. Dually $X\in \mathcal I$ implies that $\tau^{-1}_{n-1} X \in \mathcal P$. Hence $X \in \mathcal P \Leftrightarrow \tau_{n-1} X \in \mathcal I$.
We get the following proposition

\begin{prop}\label{2.11}
The functor $\tau_{n-1}$ induces an equivalence from $\uM$ to $\overline{\Omegab \M}$ and an equivalence from $^{\bot_{n-2}}\M/[\M]$ to $\M^{\bot_{n-2}}/[\Omegab \M]$.
\end{prop}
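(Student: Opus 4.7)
The plan is to derive both equivalences from two ingredients already established just before the statement: the equivalence $\tau_{n-1}: \underline{{}^{\bot_{n-2}} \M} \xrightarrow{\sim} \overline{\M^{\bot_{n-2}}}$, and the biconditionals $X \in \M \Leftrightarrow \tau_{n-1} X \in \Omegab \M$ and $X \in \mathcal P \Leftrightarrow \tau_{n-1} X \in \mathcal I$.

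For the first equivalence $\uM \to \overline{\Omegab \M}$, I first observe that $\M \subseteq {}^{\bot_{n-2}} \M$ because $\M$ is $n$-rigid, and dually $\Omegab \M \subseteq \M^{\bot_{n-2}}$ by applying $\Hom_\B(\M, -)$ to the defining short exact sequence $0 \to M \to I \to X \to 0$ of an element $X$ of $\Omegab \M$ and using $\Ext^i_\B(\M, \M) = 0 = \Ext^i_\B(\M, \mathcal I)$ for $1 \leq i \leq n-2$. Thus $\uM$ is a full subcategory of $\underline{{}^{\bot_{n-2}} \M}$ and $\overline{\Omegab \M}$ is a full subcategory of $\overline{\M^{\bot_{n-2}}}$. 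The biconditional $X \in \M \Leftrightarrow \tau_{n-1} X \in \Omegab \M$ then says precisely that $\tau_{n-1}$ sends $\uM$ into $\overline{\Omegab \M}$ and that its quasi-inverse $\tau_{n-1}^-$ sends $\overline{\Omegab \M}$ into $\uM$; the restriction of $\tau_{n-1}$ is therefore fully faithful and essentially surjective as a functor $\uM \to \overline{\Omegab \M}$.

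For the second equivalence I reduce to the first by reinterpreting the quotient categories. Since $\mathcal P \subseteq \M$, the inclusion of ideals $[\mathcal P] \subseteq [\M]$ gives a canonical identification ${}^{\bot_{n-2}} \M / [\M] \cong \underline{{}^{\bot_{n-2}} \M} / [\uM]$; dually, since $\mathcal I \subseteq \Omegab \M$ (as seen from the trivial sequence $0 \to 0 \to I \to I \to 0$), one has $\M^{\bot_{n-2}} / [\Omegab \M] \cong \overline{\M^{\bot_{n-2}}} / [\overline{\Omegab \M}]$. Any equivalence of additive categories carrying one two-sided ideal bijectively onto another descends to an equivalence of the quotients, so it suffices to check that $\tau_{n-1}$ maps the ideal $[\uM]$ in $\underline{{}^{\bot_{n-2}} \M}$ onto the ideal $[\overline{\Omegab \M}]$ in $\overline{\M^{\bot_{n-2}}}$. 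This is immediate from the first part: a morphism factors through an object of $\uM$ iff its image under $\tau_{n-1}$ factors through the corresponding object of $\overline{\Omegab \M}$, by the full faithfulness and essential surjectivity of the restricted equivalence.

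The argument is essentially formal once the ambient equivalence and the two biconditionals are in hand, so I expect no significant obstacle beyond bookkeeping. The only mild subtlety is the identification of the two quotient categories with iterated quotients of their (co)stable versions, which rests on the observation that $[\mathcal P]$ and $[\mathcal I]$ are already absorbed into $[\M]$ and $[\Omegab \M]$ respectively.
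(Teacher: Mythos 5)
Your proof is correct and follows the same route the paper takes: the paper states the three facts you invoke (the ambient equivalence $\underline{{}^{\bot_{n-2}}\M}\simeq\overline{\M^{\bot_{n-2}}}$, $X\in\M\Leftrightarrow\tau_{n-1}X\in\Omegab\M$, and $X\in\mathcal P\Leftrightarrow\tau_{n-1}X\in\mathcal I$) just before the proposition and then asserts the result without further argument, and your write-up supplies exactly the bookkeeping the paper leaves implicit. One small point worth noting: the inclusion $\Omegab\M\subseteq\M^{\bot_{n-2}}$ that you re-derive is already available since $\Omegab\M$ is by definition contained in $\M_R$, which equals $\M^{\bot_{n-2}}$ by Proposition~\ref{2.3}.
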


Denote by $\Omegab ^{-1}$ the inverse of $\Omegab :\oM\rightarrow \overline{\Omegab \M}$. Then we have

\begin{cor}\label{2.12}
The compositions $\tau^{-1}_{n-1}\circ\Omegab $ and $\Omegab ^{-1}\circ\tau_{n-1}$ induce mutually inverse equivalences between $\oM$ and $\uM$.
\end{cor}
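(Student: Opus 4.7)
The plan is to observe that this corollary is essentially a formal consequence of Propositions \ref{2.10} and \ref{2.11}, so there is almost nothing to prove beyond composing equivalences and checking that they are mutually inverse.

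First, I would recall that Proposition \ref{2.10} gives an equivalence $\Omegab : \oM \to \overline{\Omegab \M}$, whose quasi-inverse I denote $\Omegab^{-1} : \overline{\Omegab \M} \to \oM$. Proposition \ref{2.11} provides an equivalence $\tau_{n-1} : \uM \to \overline{\Omegab \M}$, with quasi-inverse $\tau_{n-1}^{-1} : \overline{\Omegab \M} \to \uM$ (which, on the nose, is induced by the functor $\tau_{n-1}^-$ appearing in the constructions before Proposition \ref{2.11}). Thus both compositions
\[
\tau_{n-1}^{-1} \circ \Omegab : \oM \longrightarrow \overline{\Omegab \M} \longrightarrow \uM, \qquad
\Omegab^{-1} \circ \tau_{n-1} : \uM \longrightarrow \overline{\Omegab \M} \longrightarrow \oM
\]
are compositions of equivalences of categories, hence equivalences.

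Second, I would verify that they are mutually quasi-inverse by a direct computation:
\[
(\Omegab^{-1} \circ \tau_{n-1}) \circ (\tau_{n-1}^{-1} \circ \Omegab) \;\simeq\; \Omegab^{-1} \circ (\tau_{n-1} \circ \tau_{n-1}^{-1}) \circ \Omegab \;\simeq\; \Omegab^{-1} \circ \Omegab \;\simeq\; \mathrm{id}_{\oM},
\]
and symmetrically $(\tau_{n-1}^{-1} \circ \Omegab) \circ (\Omegab^{-1} \circ \tau_{n-1}) \simeq \mathrm{id}_{\uM}$. There is no real obstacle here; the only thing to make sure of is that Proposition \ref{2.11} is applied to the restriction of $\tau_{n-1}$ that sends $\uM$ to $\overline{\Omegab \M}$, which is precisely the content of the first statement of that proposition. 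The corollary therefore follows immediately.
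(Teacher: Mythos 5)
Your proposal is correct and matches the paper's (implicit) reasoning: the corollary is stated without a separate proof precisely because it is the formal composition of the equivalences from Propositions \ref{2.10} and \ref{2.11}, exactly as you describe. The only point worth making explicit, which you do, is that Proposition \ref{2.11} supplies the restriction of $\tau_{n-1}$ landing in $\overline{\Omegab \M}$, so the two triangles of equivalences compose as claimed.
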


According to this corollary, we can define reciprocal equivalences:
\begin{enumerate}
\item $\mu:\Mod\uM\rightarrow \Mod\oM$, $\mu(C)=C\circ\tau^{-1}_{n-1}\circ\Omegab $,

\item $\mu^{-1}:\Mod\oM\rightarrow \Mod\uM$, $\mu^{-1}(C')=C'\circ\Omegab ^{-1}\circ\tau_{n-1}$.
\end{enumerate}

Thus we get:
\begin{prop}\label{2.13}
The functors $\mu$ and $\mu^{-1}$ induce mutually inverse equivalences between $\mod\uM$ and $\mod\oM$.
\end{prop}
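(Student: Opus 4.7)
The plan is to reduce the statement to a purely formal consequence of Corollary \ref{2.12}. Writing $\Phi := \tau^{-1}_{n-1}\circ\Omegab : \oM \to \uM$ and $\Phi^{-1} := \Omegab^{-1}\circ\tau_{n-1} : \uM \to \oM$ for the mutually inverse equivalences produced there, the functors $\mu$ and $\mu^{-1}$ are by construction precomposition with $\Phi$ and $\Phi^{-1}$ respectively. Since precomposition with an equivalence of additive categories is automatically an equivalence between the corresponding categories of additive contravariant functors, $\mu$ and $\mu^{-1}$ are already mutually inverse equivalences $\Mod\uM \leftrightarrow \Mod\oM$. Hence all that remains is to show that they restrict to equivalences $\mod\uM \leftrightarrow \mod\oM$.

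The key step is to check that $\mu$ sends representable functors to representable functors. For $M \in \uM$ and $Y \in \oM$ we have
$$\mu(\underline\Hom_{\M}(-,M))(Y) \;=\; \underline\Hom_{\M}(\Phi(Y), M) \;\simeq\; \overline\Hom_{\M}(Y, \Phi^{-1}(M)),$$
the second isomorphism being natural in $Y$ because $\Phi$ is fully faithful and essentially surjective (Corollary \ref{2.12}). Therefore $\mu(\underline\Hom_{\M}(-,M)) \simeq \overline\Hom_{\M}(-, \Phi^{-1}(M))$ in $\Mod\oM$, and the symmetric computation shows that $\mu^{-1}(\overline\Hom_{\M}(-, N)) \simeq \underline\Hom_{\M}(-, \Phi(N))$ in $\Mod\uM$ for every $N \in \oM$.

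Next, because $\mu$ is itself an equivalence of the full module categories, it is exact and in particular preserves cokernels. Given any $C \in \mod\uM$, choose a presentation
$$\underline\Hom_{\M}(-, M^1) \to \underline\Hom_{\M}(-, M^0) \to C \to 0$$
with $M^0, M^1 \in \M$; applying $\mu$ and using the identification of representables just obtained yields a presentation
$$\overline\Hom_{\M}(-, \Phi^{-1}(M^1)) \to \overline\Hom_{\M}(-, \Phi^{-1}(M^0)) \to \mu(C) \to 0$$
in $\Mod\oM$, so $\mu(C) \in \mod\oM$. The symmetric argument shows $\mu^{-1}(\mod\oM) \subseteq \mod\uM$, completing the proof.

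There is no serious obstacle here: once Corollary \ref{2.12} is available, the only content is the standard observation that an equivalence of small additive categories induces an equivalence of their module categories which preserves the representable and hence the finitely presented objects. The verification of naturality in the $\Hom$-identification above is the only place where any care is needed, and it is immediate from the fact that $\Phi$ is an equivalence.
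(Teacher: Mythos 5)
Your proof is correct and takes essentially the same approach as the paper: the paper also observes that $\mu$ and $\mu^{-1}$ are already mutually inverse equivalences on $\Mod$, so the only thing to check is that $\mu$ carries a presentation of $C \in \mod\uM$ to a presentation of $\mu(C)$, which it does via precisely the natural isomorphism $\underline\Hom_{\B}(\tau^{-1}_{n-1}\Omegab M, M_i) \simeq \overline\Hom_{\B}(M, \Omegab^{-1}\tau_{n-1}M_i)$ you write as $\underline\Hom_\M(\Phi(Y),M)\simeq\overline\Hom_\M(Y,\Phi^{-1}(M))$.
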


\begin{proof}
We just check that $\mu$ sends an object in $\mod\uM$ to an object in $\mod\oM$. For any object $C\in \mod\uM$, there exists an exact sequence:
$${\underline \Hom_{\M}(-,M_1)}\xrightarrow{\underline{\Hom}_{\M}(-,f)} {\underline \Hom_{\M}(-,M_0)}\rightarrow C\rightarrow0$$
where $M_1,M_0\in {\M}$. Hence we get the following commutative diagram:
$$\xymatrix{
{\underline \Hom_{\B}(\tau^{-1}_{n-1}\Omegab M,M_1)} \ar[r]^-{\simeq} \ar[d]_{\underline \Hom_{\B}(\tau^{-1}_{n-1}\Omegab M, f)} &{\overline \Hom_{\B}(M,\Omegab ^{-1} \tau_{n-1} M_1)} \ar[d]^{\overline \Hom_{\B}(M, \Omegab ^{-1}\tau_{n-1}f)}\\
{\underline \Hom_{\B}(\tau^{-1}_{n-1}\Omegab M,M_0)} \ar[r]^-{\simeq} \ar[d] &{\overline \Hom_{\B}(M,\Omegab ^{-1} \tau_{n-1} M_0)} \ar[d]\\
{C(\tau^{-1}_{n-1}\Omegab M)} \ar@{=}[r] \ar[d] &{C\tau^{-1}_{n-1}\Omegab (M)} \ar[d]\\
0 &0
}$$
where each column is exact and $M \in \M$. Thus $C\tau^{-1}_{n-1}\Omegab  \in \mod\oM$.
\end{proof}

\begin{thm}\label{2.14}
If $\B$ has an $(n-1)$--AR translation $\tau_{n-1}$, then we have a diagram which is commutative up to the equivalence
$$\xymatrix{
^{\bot_{n-2}}\M/[\M] \ar[d]^{\tau_{n-1}} \ar[r]^-{F} &\mod\uM \ar[d]^{\mu}\\
\M^{\bot_{n-2}}/[\Omegab \M] \ar[r]^-{G} &\mod\oM.
}$$
\end{thm}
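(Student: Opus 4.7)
The plan is to construct, for each $X \in {}^{\bot_{n-2}}\M$, a natural isomorphism $\mu F X \simeq G \tau_{n-1} X$ in $\mod\oM$. Unwinding definitions, evaluating at $M \in \M$ (regarded as an object of $\oM$) reduces the task to producing a natural isomorphism
\[
\Ext^1_{\B}(\tau_{n-1}^{-1}\Omegab M,\, X) \simeq \overline\Hom_{\B}(M,\, \tau_{n-1} X)
\]
functorial in both $M$ and $X$.

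The strategy is to bridge the two expressions through $\Ext^{n-1}_{\B}(X, M)$. On one side, apply (1) with $Y = M$ to obtain $\Ext^{n-1}_{\B}(X, M) \simeq \D\overline\Hom_{\B}(M, \tau_{n-1} X)$. On the other side, apply $\Hom_{\B}(X, -)$ to the chosen short exact sequence $0 \to M \to I_M \to \Omegab M \to 0$; the vanishing $\Ext^i_{\B}(X, I_M) = 0$ for $i \geq 1$ yields, by dimension shift, $\Ext^{n-2}_{\B}(X, \Omegab M) \simeq \Ext^{n-1}_{\B}(X, M)$ when $n \geq 3$. Combining with (2) at $i = n-2$, which reads $\Ext^1_{\B}(\tau_{n-1}^{-1}\Omegab M, X) \simeq \D\Ext^{n-2}_{\B}(X, \Omegab M)$, and using $\D^2 \simeq \id$ (by $\Hom$-finiteness), the chain of isomorphisms produces the desired one.

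The boundary case $n = 2$ requires a variant, since (2) is then vacuous. I would use (1) directly to obtain $\Ext^1_{\B}(\tau_1^{-1}\Omegab M, X) \simeq \D\underline\Hom_{\B}(\tau_1^{-1} X, \tau_1^{-1}\Omegab M) \simeq \D\overline\Hom_{\B}(X, \Omegab M)$, where the second isomorphism comes from the equivalence $\tau_1^{-1}$. The role of the dimension shift is now played by the identification $\overline\Hom_{\B}(X, \Omegab M) \simeq \Ext^1_{\B}(X, M)$: the long exact sequence presents $\Ext^1_{\B}(X, M)$ as the cokernel of $\Hom_{\B}(X, I_M) \to \Hom_{\B}(X, \Omegab M)$, and this image coincides with $[\mathcal I](X, \Omegab M)$. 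The non-trivial inclusion uses the rigidity of $\M$: for any $I' \in \mathcal I \subseteq \M$, the vanishing $\Ext^1_{\B}(I', M) = 0$ allows one to lift any $I' \to \Omegab M$ along the deflation $I_M \to \Omegab M$, so every morphism $X \to \Omegab M$ factoring through some injective already factors through $I_M$.

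Naturality of each step in both $X$ and $M$ follows from the functoriality of the long exact sequence, of (1)--(2), and of $\tau_{n-1}^{-1}$, so the composite is a natural isomorphism of functors; descent to $^{\bot_{n-2}}\M/[\M]$ is automatic since $\mu \circ F$ and $G \circ \tau_{n-1}$ factor through this quotient by construction. The main technical point I anticipate is precisely the $n = 2$ case above: without the rigidity argument identifying $[\mathcal I](X, \Omegab M)$ with the image of $\Hom_{\B}(X, I_M)$, one only obtains a canonical surjection $\Ext^1_{\B}(X, M) \twoheadrightarrow \overline\Hom_{\B}(X, \Omegab M)$, which would be insufficient.
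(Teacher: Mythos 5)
Your proposal follows essentially the same route as the paper: reduce to a pointwise natural isomorphism $\Ext^1_\B(\tau_{n-1}^{-1}\Omegab M, X) \simeq \overline\Hom_\B(M,\tau_{n-1}X)$, bridge through $\Ext^{n-1}_\B(X,M)$ using the AR-duality isomorphisms and a dimension shift along $0\to M\to I_M\to\Omegab M\to 0$, and in the base case $n=2$ substitute the dimension shift by the identification $\Ext^1_\B(X,M)\simeq\overline\Hom_\B(X,\Omegab M)$ proved via rigidity of $\M$. One small bookkeeping remark: when you invoke (2), the form to quote is the one at $i=1$ with arguments $(X,\Omegab M)$, namely $\Ext^{n-2}_\B(X,\Omegab M)\simeq\D\Ext^1_\B(\tau_{n-1}^{-}\Omegab M,X)$, since then the required memberships $X\in{}^{\bot_{n-2}}\mathcal P$ and $\Omegab M\in\mathcal I^{\bot_{n-2}}$ both hold (the latter via $\Ext^i_\B(\mathcal I,\Omegab M)\simeq\Ext^{i+1}_\B(\mathcal I,M)=0$ by rigidity); reading it ``at $i=n-2$'' would instead require $X\in\mathcal I^{\bot_{n-2}}$, which is not given.
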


\begin{proof}
Let $M \in \M$ and $X \in {}^{\bot_{n-2}} \M$. Then, applying $\Hom_\B(X,-)$ to $0 \rightarrow M \rightarrow I_M \xrightarrow{\alpha} \Omegab M \rightarrow 0$, we get an exact sequence:
$$\Hom_{\B}(X,I_M) \xrightarrow{\Hom_\B(X,\alpha)} \Hom_{\B}(X,\Omegab M)\rightarrow \Ext^1_{\B}(X,M)\rightarrow 0.$$
Moreover, if $I \in \mathcal{I}$, applying $\Hom_{\B}(I,-)$ as $I \in \M$ and $\M$ is cluster tilting, we get an exact sequence
$$\Hom_{\B}(I,I_M)\xrightarrow{\Hom_\B(I, \alpha)} \Hom_{\B}(I,\Omegab M)\rightarrow \Ext^1_{\B}(I,M)=0$$
and therefore $$\Ext^1_{\B}(X,M) \simeq \Hom_{\B}(X,\Omegab M)/\Im(\Hom_\B(X,\alpha))=\overline\Hom_{\B}(X,\Omegab M).$$
Hence, if $n=2$, we have
\begin{align*}
 \overline \Hom_{\B}(M,\tau_1 X) &\simeq {\D}\Ext^1_{\B}(X,M)\simeq {\D}\overline \Hom_{\B}(X,\Omegab M)\\
                               &\simeq {\D}\overline \Hom_{\B}(X,\tau_1\tau^{-1}_1\Omegab M)\\
                               &\simeq \Ext^1_{\B}(\tau^{-1}_1\Omegab M,X)
\end{align*}
which implies that $G(\tau_1 X)(M)\simeq \mu (FX)(M)$. As all the isomorphisms used are functorial in $M$ and $X$, $G\tau_1 \simeq \mu F$ as functors.

When $n\geq3$, we get an exact sequence
$$0 = \Ext^i_{\B}(X,I_M)\rightarrow \Ext^{i}_{\B}(X,\Omegab M)\rightarrow \Ext^{i+1}_{\B}(X,M)\rightarrow \Ext^{i+1}_{\B}(X,I_M) = 0$$
for $i\geq1$. Thus we have $\Ext^{i}_{\B}(X,\Omegab M)\simeq \Ext^{i+1}_{\B}(X,M)$. Since $X \in {}^{\bot_{n-2}} \M \subseteq {}^{\bot_{n-2}} \mathcal P$,
\begin{align*}
 \overline \Hom_{\B}(M,\tau_{n-1} X) &\simeq {\D}\Ext^{n-1}_{\B}(X,M)\simeq {\D}\Ext^{n-2}_{\B}(X,\Omegab M)\\
                               &\simeq \Ext^1_{\B}(\Omegab M,\tau_{n-1}X)\\
                               &\simeq \Ext^1_{\B}(\tau^{-1}_{n-1}\Omegab M,X)
\end{align*}
which also implies $G(\tau_{n-1} X)(M)\simeq \mu (FX)(M)$. As all the isomorphisms used are functorial, $G\tau_{n-1} \simeq \mu F$ as functors.
\end{proof}

\section{Examples}

In this section, we explain two sources of examples, one coming from Geometry (Cohen-Macauley modules over a singularity) and the other one coming more directly from representation theory (Auslander algebras). These examples have been studied extensively by Iyama (see \cite{IyamaHigher} and \cite{5}) from the point of view of (higher) cluster tilting theory.

\begin{exm}\label{exCM}
 Let $S = \CC\llbracket X_1, X_2, \dots, X_d \rrbracket$ be a formal power series ring in~$d$ variables and $G$ be a finite subgroup of $\GL_d(\CC)$ without pseudo-reflections acting on $S$. Let $R = S^G$. According to \cite[Proposition 13]{HE}, $R$ is a complete local Cohen-Macaulay ring of Krull-dimension $d$. Then, by \cite[Theorem 2.5]{IyamaHigher}, if $R$ is an isolated singularity, $S$ is a $(d-1)$-cluster tilting object of $\CM(R)$.

 It is known by the end of proof of \cite[Proposition 2.1]{Aus1} together with the discussion after \cite[Proposition 1.1]{Aus2} that $\End_R(S) \simeq S * G$ where $S*G$ is the usual skew-group algebra (see also \cite{IT}). Thus, in this case, if $d \geq 3$, we deduce from Theorem \ref{2.5} that
 $$\frac{{}^{\bot_{d-3}} S}{[S]} \simeq \mod \left(\uEnd_R(S)\right)^{\op} = \mod \left(\frac{S * G}{(e)}\right)^{\op}$$
  where the idempotent
 $$e = \frac{\sum_{g \in G} g}{\# G}$$
 corresponds to the trivial representation of $G$.

 Remark that, if $R$ is not Gorenstein, then the exact category of Cohen-Macaulay modules $\CM(R)$ is not Frobenius. Moreover $R$ is Gorenstein if and only if $G \in \SL_n(\CC)$ \cite[Theorem 1]{Wata}. In the Gorenstein case, see also \cite{AIR} which realizes $\underline\CM(R)$ as a generalized cluster category.

 For example, if $G = \langle \zeta \operatorname{Id} \rangle$ where $\zeta$ is a primitive $n$-th root of the unity, then
 $$\frac{{}^{\bot_{d-3}} S}{[S]} \simeq \mod \CC Q / I $$
 where
 $$Q = \boxinminipage{\xymatrix@C=2cm{\bullet \ar@/^.28cm/[r]^{x_1^{(1)}} \ar@/_.28cm/[r]^{\vdots}_{x_d^{(1)}} & \bullet \ar@/^.28cm/[r]^{x_1^{(2)}} \ar@/_.28cm/[r]^{\vdots}_{x_d^{(2)}} & \bullet \ar@{..}[r] & \bullet\ar@/^.28cm/[r]^{x_1^{(n-2)}}  \ar@/_.28cm/[r]^{\vdots}_{x_d^{(n-2)}} & \bullet }}$$
 and $I$ is the ideal of $\CC Q$ generated by the relations $x_{i}^{(k)} x_j^{(k+1)} = x_{j}^{(k)} x_i^{(k+1)}$ for $1 \leq i < j \leq d$ and $1 \leq k \leq n-3$ (see \cite{Aus1, IT, Yoshi} for details about the computation).
\end{exm}

\def\newboxedcommand#1#2 
{%
 \def\newboxedcommandlocala##1##2.{##2}%
 \edef\newboxedcommandlocalb{\expandafter\newboxedcommandlocala\string#1.}%
 \expandafter\newsavebox\csname\newboxedcommandlocalb savebox\endcsname%
 \expandafter\sbox\csname\newboxedcommandlocalb savebox\endcsname{#2}%
 \expandafter\newlength\csname\newboxedcommandlocalb largeurbox\endcsname%
 \expandafter\settowidth\csname\newboxedcommandlocalb largeurbox\endcsname{\usebox{\csname\newboxedcommandlocalb savebox\endcsname}}%
 \edef#1{\noexpand\begin{minipage}{\csname\newboxedcommandlocalb largeurbox\endcsname}\usebox{\csname\newboxedcommandlocalb savebox\endcsname}\noexpand\end{minipage}}%
}

\newboxedcommand\Sone
{
\small\objectmargin={0.5pt}$\xymatrix@C=.2cm@R=.2cm{
1
}$
}

\newboxedcommand\Stwo
{
\small\objectmargin={0.5pt}$\xymatrix@C=.2cm@R=.2cm{
2
}$
}

\newboxedcommand\Sthree
{
\small\objectmargin={0.5pt}$\xymatrix@C=.2cm@R=.2cm{
3
}$
}

\newboxedcommand\Sfour
{
\small\objectmargin={0.5pt}$\xymatrix@C=.2cm@R=.2cm{
4
}$
}

\newboxedcommand\Sfive
{
\small\objectmargin={0.5pt}$\xymatrix@C=.2cm@R=.2cm{
5
}$
}

\newboxedcommand\Ssix
{
\small\objectmargin={0.5pt}$\xymatrix@C=.2cm@R=.2cm{
6
}$
}

\newboxedcommand\Pone
{
\small\objectmargin={0.5pt}$\xymatrix@C=.2cm@R=.2cm{
1 \ar[dr] & & \\
& 2 \ar[dr] & \\
& & 3
}$
}

\newboxedcommand\Ptwo
{
\small\objectmargin={0.5pt}$\xymatrix@C=.2cm@R=.2cm{
& 2 \ar[dl] \ar[dr] & \\
3 \ar[dr] & & 4 \ar[dl] \\
& 5 & \\
}$
}

\newboxedcommand\Pthree
{
\small\objectmargin={0.5pt}$\xymatrix@C=.2cm@R=.2cm{
3 \ar[dr] & & \\
& 5 \ar[dr] & \\
& & 6
}$
}

\newboxedcommand\Pfour
{
\small\objectmargin={0.5pt}$\xymatrix@C=.2cm@R=.2cm{
4 \ar[dr] & \\
& 5 \\
}$
}

\newboxedcommand\Pfive
{
\small\objectmargin={0.5pt}$\xymatrix@C=.2cm@R=.2cm{
5 \ar[dr] & \\
& 6  \\
}$
}

\newcommand\Psix\Ssix

\newcommand\Ione\Sone

\newboxedcommand\Itwo
{
\small\objectmargin={0.5pt}$\xymatrix@C=.2cm@R=.2cm{
1 \ar[dr] & \\
& 2 \\
}$
}

\newcommand\Ithree\Pone

\newboxedcommand\Ifour
{
\small\objectmargin={0.5pt}$\xymatrix@C=.2cm@R=.2cm{
2 \ar[dr] & \\
& 4 \\
}$
}

\newcommand\Ifive\Ptwo

\newcommand\Isix\Pthree

\newboxedcommand\threefive
{
\small\objectmargin={0.5pt}$\xymatrix@C=.2cm@R=.2cm{
3 \ar[dr] & \\
& 5 \\
}$
}

\newboxedcommand\threefourfive
{
\small\objectmargin={0.5pt}$\xymatrix@C=.2cm@R=.2cm{
3 \ar[dr] &  & 4 \ar[dl] \\
& 5 & \\
}$
}

\newboxedcommand\twothreefour
{
\small\objectmargin={0.5pt}$\xymatrix@C=.2cm@R=.2cm{
& 2 \ar[dl] \ar[dr] & \\
3 & & 4 \\
}$
}

\newboxedcommand\twothree
{
\small\objectmargin={0.5pt}$\xymatrix@C=.2cm@R=.2cm{
2 \ar[dr] & \\
& 3 \\
}$
}

Recently Herschend, Iyama and Oppermann studied \emph{$n$-representation-finite algebras} (see for example \cite{HI1, HI2, 5, IO}), which are finite dimensional algebras with global dimension at most $n$ and has an $n$-cluster tilting module. We can apply our results to all of those algebras. From now on, we describe explicitely a simple example taken from \cite{5}.


\begin{exm}\label{4.1.}
Let $\Lambda$ be the Auslander algebra of $k \vec{A}_3$. That is $kQ/R$ where $Q$ is the following quiver
$$\xymatrix@C=0.4cm@R0.4cm{
&&3 \ar[dl]\\
&5 \ar[dl] \ar@{.}[rr] &&2 \ar[dl] \ar[ul]\\
6 \ar@{.}[rr] &&4 \ar[ul] \ar@{.}[rr] &&1 \ar[ul]}$$
and the ideal of relations $R$ is generated by the mesh relations symbolized by dashed lines.
Then, using the method introduced in \cite[{\S 1}]{5}, one can compute a cluster tilting subcategory $\M$ of $\mod \Lambda$. It is generated by the direct sum of the $\tau_2^i(D\Lambda)$ for $i \geq 0$ where $\tau_2$ is the $2$-AR translation.
\end{exm}

Let us compute the $\tau_2^i(D\Lambda)$, each indecomposable module being depicted by its compositions series representation:


\begin{eqnarray*}
&D\Lambda=
\begin{smallmatrix}
1
\end{smallmatrix}\oplus
\begin{smallmatrix}
1&\\
&2
\end{smallmatrix}\oplus
\begin{smallmatrix}
1&&\\
&2&\\
&&3
\end{smallmatrix}\oplus
\begin{smallmatrix}
&2\\
4&
\end{smallmatrix}\oplus
\begin{smallmatrix}
&2&\\
4&&3\\
&5&
\end{smallmatrix}\oplus
\begin{smallmatrix}
&&3\\
&5&\\
6&&
\end{smallmatrix},&
\end{eqnarray*}


\begin{eqnarray*}
\tau_2(D\Lambda)=
\begin{smallmatrix}
4
\end{smallmatrix}\oplus
\begin{smallmatrix}
4&\\
&5
\end{smallmatrix}\oplus
\begin{smallmatrix}
&5\\
6&
\end{smallmatrix} \quad \quad
\tau_2^2(D\Lambda)=
\begin{smallmatrix}
6
\end{smallmatrix} \quad \quad
\tau_2^3(D\Lambda)=0.&
\end{eqnarray*}

The quiver of $\M$ is given in Figure \ref{QM}.

\begin{figure}
$$\xymatrix@C=0.0cm@R0.0cm{
&&&&{\begin{smallmatrix}
1&&\\
&2&\\
&&3
\end{smallmatrix}}\ar[drr]\\
&&{\begin{smallmatrix}
&2&\\
4&&3\\
&5&
\end{smallmatrix}}\ar[drr]\ar[urr]
&&&&{\begin{smallmatrix}
1&&\ \\
&2&
\end{smallmatrix}}\ar[drrrr]\ar@{.}[ddlll]\\
{\begin{smallmatrix}
&&3\\
&5&\\
6&&
\end{smallmatrix}}\ar[urr]
&&&&{\begin{smallmatrix}
&2&\ \\
4&&
\end{smallmatrix}}\ar[urr]\ar@{.}[ddlll]
&&&&&&{\begin{smallmatrix}
\ &1&\
\end{smallmatrix}}\ar@{.}[ddlllll]\\
&&&{\begin{smallmatrix}
4&&\ \\
&5&
\end{smallmatrix}}\ar[uul]\ar[drr]\\
&{\begin{smallmatrix}
\ \\
&5&\ \\
6&&
\end{smallmatrix}}\ar[uul]\ar[urr]
&&&&{\begin{smallmatrix}
\ &4&\
\end{smallmatrix}}\ar[uul]\ar@{.}[dddlll]\\
\ \\
\ \\
&&{\begin{smallmatrix}
\ \\
\ &6&\ \\
\
\end{smallmatrix}}\ar[uuul]}$$
\caption{Quiver of $\M$}
\label{QM}
\end{figure}

Then we can calculate $\Omegab \M$ easily since in this case $\Omegab \M=\oM^\bot=\{ X\in \mod\Lambda \text{ }| \text{ } \overline\Hom_{\Lambda}(\M,X)=0 \}$. We give a full view of these categories in Figure 2.

In this example, the quiver of $\mod \Lambda/[\M]$ is the following.

$$\xymatrix@C=0.4cm@R0.4cm{
&{\begin{smallmatrix}
&&3\ \\
&5&
\end{smallmatrix}}\ar[dr]
&&&&{\begin{smallmatrix}
2&\ \\
&3
\end{smallmatrix}}\ar[dr]\\
{\begin{smallmatrix}
\ &5&\
\end{smallmatrix}} \ar[ur] \ar@{.}[rr]
&&{\begin{smallmatrix}
4&&3\ \\
&5&
\end{smallmatrix}}\ar[dr] \ar@{.}[rr]
&&{\begin{smallmatrix}
&2&\ \\
4&&3
\end{smallmatrix}}\ar[ur] \ar@{.}[rr]
&&{\begin{smallmatrix}
\ &2&\
\end{smallmatrix}}\\
&&&{\begin{smallmatrix}
\ &3&\
\end{smallmatrix}} \ar[ur]}$$
The quiver of $\uM$ is the following.

$$\xymatrix@C=0.4cm@R0.4cm{
&{\begin{smallmatrix}
&&2\ \\
&4&
\end{smallmatrix}}\ar[rr] \ar@{.}[drrr]
&&{\begin{smallmatrix}
1&\ \\
&2
\end{smallmatrix}}\ar[dr]\\
{\begin{smallmatrix}
\ &4&\
\end{smallmatrix}} \ar[ur] \ar@{.}[urrr]
&&&&{\begin{smallmatrix}
\ &1&\
\end{smallmatrix}}}$$
As expected, we obtain that $\mod \Lambda/[\M] \simeq \mod\uM$. One can also calculate and check the equivalence $\mod \Lambda/[\oM^\bot] \simeq \mod\oM$.

\begin{exm}\label{4.2}
Keeping previous notation, let
$$\M'=\add
\left(\begin{smallmatrix}
6
\end{smallmatrix}\oplus
\begin{smallmatrix}
&&5\\
&6&
\end{smallmatrix}\oplus
\begin{smallmatrix}
&&3\\
&5&\\
6&&
\end{smallmatrix}\oplus
\begin{smallmatrix}
4&&\\
&5&
\end{smallmatrix}\oplus
\begin{smallmatrix}
&2&\\
4&&3\\
&5&
\end{smallmatrix}\oplus
\begin{smallmatrix}
1&&\\
&2&\\
&&3
\end{smallmatrix}\oplus
\begin{smallmatrix}
4
\end{smallmatrix}\oplus
\begin{smallmatrix}
&&2\\
&4&
\end{smallmatrix}\right).
$$
We get ${\mathcal P}\subseteq \M'\subseteq \M$. Hence $\M'$ is a rigid subcategory of $\mod \Lambda$ containing all the projective objects. By Theorem \ref{2.5}, we get $\M'_L/[\M']\simeq\mod\underline{\M'}$. In fact
$$\M'_L=
\M'\oplus
\add\left(\begin{smallmatrix}
5
\end{smallmatrix}\oplus
\begin{smallmatrix}
&3\\
5&
\end{smallmatrix}\oplus
\begin{smallmatrix}
4&&3\\
&5&
\end{smallmatrix}\right)$$
and the quiver of $\M'_L/[\M']$ is the following:
$$\xymatrix@C=0.4cm@R0.4cm{
&{\begin{smallmatrix}
&&3 \\
&5&
\end{smallmatrix}} \ar[dr]\\
{\begin{smallmatrix}
5
\end{smallmatrix}} \ar[ur] \ar@{.}[rr] &&{\begin{smallmatrix}
4&&3\\
&5&
\end{smallmatrix}}
}$$
which is isomorphic to the AR quiver of
$\uM'\simeq k[{\begin{smallmatrix}
4
\end{smallmatrix}} \longrightarrow {\begin{smallmatrix}
&2\\
4&
\end{smallmatrix}}]$.
\end{exm}

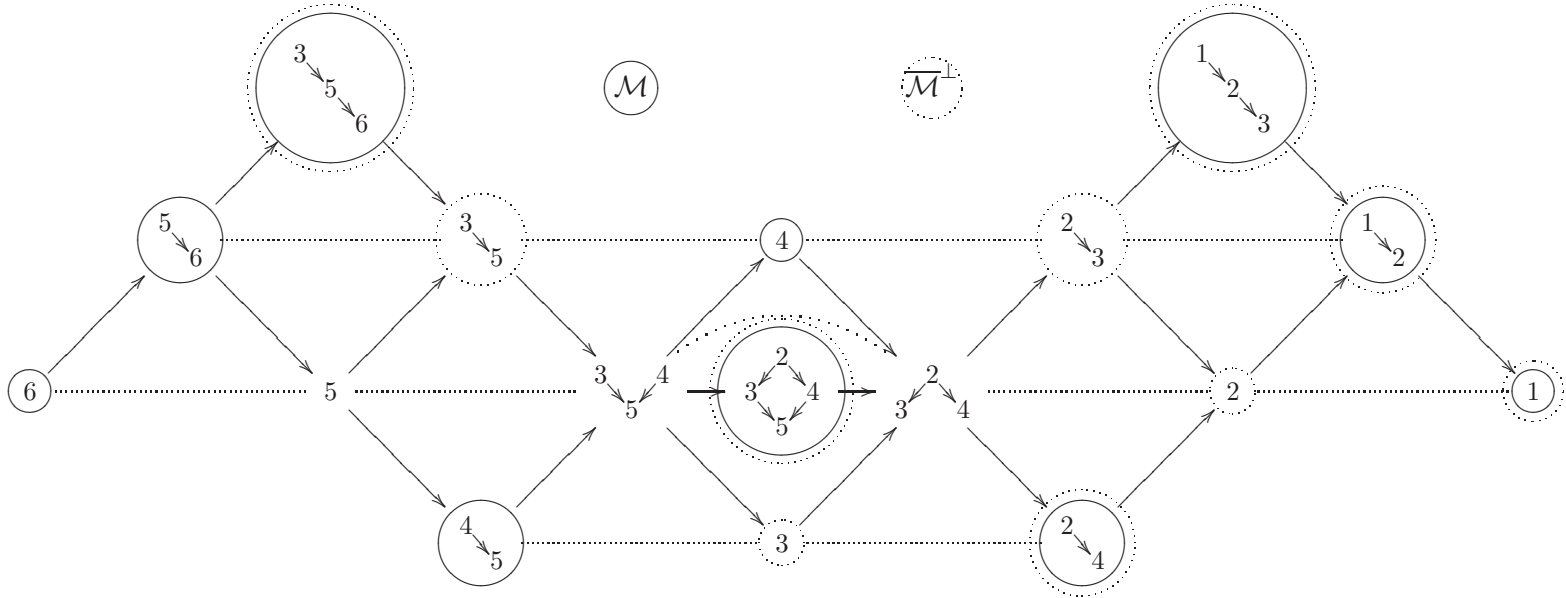
\begin{sidewaysfigure}

\vspace*{10cm}

$$\xymatrix@!C=1.15cm@!R=1.15cm{
 & & \Pthree \POS*\frm<1cm>{o} \POS*\frm<1.1cm>{.o} \ar[dr] & & \mathcal{M} \POS*\frm<.4cm>{o} & & \overline{\mathcal{M}}^\perp \POS*\frm<.4cm>{.o} & & \Pone \POS*\frm<1cm>{o} \POS*\frm<1.1cm>{.o} \ar[dr] & & \\
 & \Pfive \POS*\frm<.6cm>{o} \ar[ur] \ar[dr] \ar@{..}[rr] & & \threefive \POS*\frm<.6cm>{.o} \ar[dr] \ar@{..}[rr] & & \Sfour \POS*\frm<.3cm>{o} \ar[dr] \ar@{..}[rr] & & \twothree \POS*\frm<.6cm>{.o} \ar[ur] \ar[dr] \ar@{..}[rr] & & \Itwo \POS*\frm<.6cm>{o} \POS*\frm<.7cm>{.o} \ar[dr] & \\
 \Ssix \POS*\frm<.3cm>{o} \ar[ur] \ar@{..}[rr] & & \Sfive \ar[ur] \ar[dr] \ar@{..}[rr] & & \threefourfive \ar[ur] \ar[dr] \ar[r] \ar@/^1cm/@{..}[rr] & \Ptwo \POS*\frm<.85cm>{o} \POS*\frm<.95cm>{.o} \ar[r] & \twothreefour \ar[ur] \ar[dr] \ar@{..}[rr] & & \Stwo \POS*\frm<.3cm>{.o} \ar[ur] \ar@{..}[rr] & & \Sone \POS*\frm<.3cm>{o} \POS*\frm<.4cm>{.o} \\
 & & & \Pfour \POS*\frm<.6cm>{o} \ar[ur] \ar@{..}[rr] & & \Sthree \POS*\frm<.3cm>{.o} \ar[ur] \ar@{..}[rr] & & \Ifour \POS*\frm<.6cm>{o} \POS*\frm<.7cm>{.o} \ar[ur] & & &
}$$
\caption{AR-quiver of the Auslander algebra of $kA_3$}
\end{sidewaysfigure}

\section*{Acknowledgments}
The authors would like to thank Osamu Iyama for his helpful advises and corrections and the referee for his valuable comments.

\end{document}